\documentclass[11pt]{amsart}
\usepackage{amsmath,amsthm,verbatim,amssymb,amsfonts,amscd,  graphicx}
\usepackage[normalem]{ulem}
\usepackage{xcolor}
\usepackage{graphics}

\usepackage{euscript, enumerate}
\usepackage{url,hyperref}

\oddsidemargin0.0cm
\evensidemargin0.0cm
\textwidth16.5cm

\newcommand{\p}{\partial}

\newcommand{\e}{\varepsilon}

\newcommand{\eps}{\varepsilon}

\newcommand{\be}{\begin{equation}}
\newcommand{\ba}{\begin{aligned}}
\newcommand{\bee}{\begin{equation*}}
\newcommand{\ee}{\end{equation}}
\newcommand{\ea}{\end{aligned}}
\newcommand{\eee}{\end{equation*}}
\newcommand{\bea}{\begin{equation} \begin{aligned} }
\newcommand{\eea}{\end{aligned}\end{equation} }




\theoremstyle{plain}
\newtheorem{theorem}{Theorem}[section]

\newtheorem{lemma}[theorem]{Lemma}
\newtheorem{proposition}[theorem]{Proposition}

\newtheorem{claim}{Claim}[section]

\theoremstyle{remark}

\newtheorem{remark}[theorem]{Remark}

\theoremstyle{definition}
\newtheorem{definition}[theorem]{Definition}

\numberwithin{equation}{section}

\begin{document}
\title{Hitting estimates on Einstein manifolds and applications}
\author{Beomjun Choi and Robert Haslhofer}

\begin{abstract} We generalize the Benjamini-Pemantle-Peres estimate relating hitting probability and Martin capacity to the setting of manifolds with Ricci curvature bounded below. As applications we obtain: (1) a sharp estimate for the probability that Brownian motion comes close to the high curvature part of a Ricci-flat manifold, (2) a proof of an unpublished theorem of Naber that every noncollapsed limit of Ricci-flat manifolds is a weak solution of the Einstein equations, (3) an effective intersection estimate for two independent Brownian motions on manifolds with non-negative Ricci curvature and positive asymptotic volume ratio. We also obtain generalizations of (1) and (2) for the manifolds with two-sided Ricci bounds and Einstein manifolds with nonzero Einstein constant.
\end{abstract}
\maketitle

\section{Introduction}
By a classical theorem of Kakutani \cite{Kakutani}, Brownian motion in Euclidean space of dimension $n\geq 3$ hits a compact set with positive probability if and only if the set has positive Newtonian capacity. More recently, Benjamini-Pemantle-Peres \cite{BPP} discovered a sharp two-sided estimate for the hitting probability. One of their key insights was that in order to get the right dependence on the initial point $x_0$, one should work with the capacity associated to the Martin kernel
\begin{equation}
K(x,y)=\frac{|x-x_0|^{n-2}}{|x-y|^{n-2}}\, .
\end{equation}
Our first main result (see Theorem \ref{thm-hitting} below) generalizes the Benjamini-Pemantle-Peres estimate to the setting of Riemannian manifolds $(M^n,g)$ with non-negative Ricci-curvature, i.e.
\begin{equation}
\text{Ric} \ge 0.
\end{equation}
Let $p_t(x,y)$ be the heat kernel on $M$. By Cheeger-Yau \cite{CheegerYau} it satisfies the lower bound
\bea \label{eq_CY} p_t(x,y) \ge \frac{1}{(4\pi t)^{n/2}} e^{-\frac{d(x,y)^2}{4 t}},\eea
and by Li-Yau \cite{LiYau} for each $\gamma>4$ there is a $C_\gamma=C_\gamma(n)<\infty$ such that 
\bea \label{eq_LY}p_t(x,y)\le   \frac{C_\gamma\omega_n}{(4\pi)^{n/2} \text{Vol}(B(x,{\sqrt t}))} e^{-\frac{d(x,y)^2}{\gamma t}} .\eea
Let $X_t$ be Brownian motion on $M$ starting at $x_0\in M$ (for background see e.g. \cite{Hsu_book}). In terms of the heat kernel, $X_t$ is uniquely characterized by the formula
\begin{equation}\label{char_wiener}
\mathbb{P}_{x_0}[X_{t_1}\in U_1,\ldots ,X_{t_k}\in U_k]= \int_{U_1\times\ldots\times U_k} p_{t_1}(x_0,y_1) \cdots p_{t_k-t_{k-1}}(y_{k-1},y_k) dV(y_1)\ldots dV(y_k).
\end{equation}
For any Borel set $A\subseteq M$ the Martin capacity is defined by
\bea \label{eq-martincap} \text{Cap}_K(A):= \left[\inf_{\nu(A)=1}  \iint_{A\times A} K(x,y)\, d\nu (x) d\nu(y)  \right]^{-1} ,\eea
where $K$ denotes the Martin kernel
\bea\label{eq_martin}
K(x,y)=\begin{cases} \begin{aligned} &\left(\frac{d(x_0,y)}{d(x,y)}\right)^{n-2} & \text{ if } x\neq y \\ &\qquad \infty & \text{ if } x=y\end{aligned}\end{cases}.
\eea
Using these notions, we can now state our hitting estimate:

\begin{theorem} \label{thm-hitting}Let $(M^n,g)$ be a complete Riemannian manifold of dimension $n\geq 3$, with non-negative Ricci-curvature.  Then for all compact sets $A\subseteq M^n $ and all $T\leq\infty$ we have the hitting estimate
 \bea\label{eq-hittingprob} \frac{1}{2\Lambda}  \mathrm{Cap}_{K}(A)\le \mathbb{P}_{x_0}[ X_t \in  A \textrm{ for some } { 0<t< T} ] \le  \Lambda \mathrm{Cap}_{K}(A), \eea 
where
\bea \label{eq-C2}\Lambda=    \frac{ \omega_n}{v} \left ( \frac{ \gamma}{4}\right )^{\frac n 2 -1 }  C_\gamma  \frac{  \int_{0}^\infty   \xi^{\frac{n}{2} -2} e^{-\xi} d\xi } { \int_{ \frac{\mathrm{diam}(A\cup \{x_0\})^2}{4T}}^\infty  \xi^{\frac{n}{2} -2} e^{-\xi} d\xi },  \eea
with \bea \label{eq-v'} v = \inf_{y\in A \cup \{x_0\}} \frac{\mathrm{Vol}(B(y,\sqrt{2T}))}{(2T)^{n/2}}.  \eea 
 \end{theorem}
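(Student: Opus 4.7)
The plan is to adapt the Benjamini-Pemantle-Peres (BPP) two-moment method to the truncated Green's function
\[
 G^T(x,y) \,:=\, \int_0^T p_t(x,y)\,dt,
\]
which plays the role of the Green's function of Brownian motion ``killed at time $T$.'' The Martin kernel $K$ in \eqref{eq_martin} is \emph{not} intrinsic to $M$; the job of the heat-kernel bounds \eqref{eq_CY} and \eqref{eq_LY} is to show that the natural ratio $G^T(\cdot,\cdot)/G^T(x_0,\cdot)$ is comparable to $K$ with constant exactly $\Lambda$.

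The first ingredient is this two-sided comparison. Cheeger-Yau \eqref{eq_CY}, with the substitution $\xi=d(x,y)^2/(4t)$, gives
\[
 G^T(x,y) \,\ge\, \frac{d(x,y)^{-(n-2)}}{4\pi^{n/2}} \int_{d(x,y)^2/(4T)}^{\infty} \xi^{n/2-2}e^{-\xi}\,d\xi,
\]
which for $d(x,y)\le\mathrm{diam}(A\cup\{x_0\})$ contains the cutoff integral $\int_{\mathrm{diam}^2/(4T)}^\infty$ appearing in \eqref{eq-C2}. Li-Yau \eqref{eq_LY}, combined with Bishop-Gromov monotonicity $\mathrm{Vol}(B(x,\sqrt t))\ge (t/(2T))^{n/2}\mathrm{Vol}(B(x,\sqrt{2T}))$ for $t\le 2T$ and the substitution $\xi=d(x,y)^2/(\gamma t)$, yields for $x\in A\cup\{x_0\}$
\[
 G^T(x,y) \,\le\, \frac{C_\gamma\,\omega_n\,(\gamma/4)^{n/2-1}}{v}\,d(x,y)^{-(n-2)}\int_0^\infty \xi^{n/2-2}e^{-\xi}\,d\xi.
\]
The ratio of these bounds is exactly $\Lambda$ of \eqref{eq-C2}, so for $y,y'\in A$,
\[
 \Lambda^{-1}K(y,y') \,\le\, \frac{G^T(y,y')}{G^T(x_0,y')} \,\le\, \Lambda\, K(y,y').
\]

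For the lower bound on hitting probability, I apply the second-moment (Paley-Zygmund) method. Let $\nu$ be the $K$-equilibrium probability measure on $A$ with $E_K(\nu)=\mathrm{Cap}_K(A)^{-1}$, and set $d\mu(y):=d\nu(y)/G^T(x_0,y)$. Consider the smoothed occupation $Z := \int_0^T\int_A p_\epsilon(X_s,y)\,d\mu(y)\,ds$. The choice of $d\mu$ exactly cancels the $G^T(x_0,\cdot)$ factor, giving $\mathbb{E}[Z]\to 1$ as $\epsilon\downarrow 0$; meanwhile the Markov property and symmetrization in $s,t$ give
\[
 \mathbb{E}[Z^2] \,\le\, 2\iint G^T(x_0,y)\,G^T(y,y')\,d\mu(y)\,d\mu(y') \,=\, 2\iint\frac{G^T(y,y')}{G^T(x_0,y')}\,d\nu(y)\,d\nu(y') \,\le\, 2\Lambda\,E_K(\nu).
\]
Paley-Zygmund then yields $\mathbb{P}[Z>0]\ge \mathrm{Cap}_K(A)/(2\Lambda)$; since $\{Z>0\}\subseteq\{X_t\in A_\epsilon\text{ for some }t\in(0,T)\}$ and the latter decreases to $\{X\text{ hits }A\text{ on }(0,T)\}$ as $\epsilon\downarrow 0$ (by continuity of $X$ and compactness of $A$), passing to the limit gives the lower bound in \eqref{eq-hittingprob}. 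For the upper bound, I apply the corresponding first-moment argument: with the same $\nu$, the potential $\phi(x):=\int G^T(x,y)/G^T(x_0,y)\,d\nu(y)$ satisfies $\phi(x_0)=1$ and (by the Markov property for the $T$-killed process) $\mathbb{E}[\phi(X_{\tau_A})\mathbf{1}_{\{\tau_A<T\}}]\le 1$; the comparison above, together with the fact that the $K$-equilibrium potential is $\ge E_K(\nu)$ on $A$, then forces $\phi(X_{\tau_A})\ge \Lambda^{-1}\mathrm{Cap}_K(A)^{-1}$ on the hitting event, yielding $\mathbb{P}[\tau_A<T]\le \Lambda\,\mathrm{Cap}_K(A)$.

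The main obstacle is the first-moment identity for \emph{finite} horizon $T$: a naive strong Markov at $\tau_A$ produces $\mathbb{E}[G^{T-\tau_A}(X_{\tau_A},y)\mathbf{1}_{\{\tau_A<T\}}]\le G^T(x_0,y)$ rather than the desired $\mathbb{E}[G^T(X_{\tau_A},y)\mathbf{1}_{\{\tau_A<T\}}]\le G^T(x_0,y)$, and the residual time $T-\tau_A$ can be arbitrarily small on the hitting event. Handling this so that the constant $\Lambda$ emerges unchanged is the central technical step, and will most likely require either working with the space-time Markov process $(X_t,t)$ on $M\times[0,T]$ (whose Green's function from $(x_0,0)$ is $G^T(x_0,\cdot)$) or a direct PDE/balayage argument for the parabolic hitting probability. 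A secondary bookkeeping point is matching the Li-Yau prefactor to $v^{-1}$ at the specific scale $\sqrt{2T}$ using Bishop-Gromov; this particular choice of scale—rather than $\sqrt t$ or $\sqrt T$—is precisely what makes the upper/lower bound ratio in Step 1 collapse to the exact expression \eqref{eq-C2}.
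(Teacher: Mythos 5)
Your heat-kernel comparison of $G^T$ with the Martin kernel and your second-moment lower bound are essentially the paper's argument (the paper smooths with indicators of $\e$-balls and a sup-version $h^*_\e$ of the occupation functional, then passes to the limit via a dominated-convergence claim; your heat-kernel smoothing faces the same technical point, since in $\mathbb{E}[Z^2]$ the integral over the intermediate position does not factor, but that is fixable in the same way). The genuine gap is the upper bound: you correctly isolate the finite-horizon obstacle --- strong Markov at $\tau_A$ only yields $\mathbb{E}\big[G^{T-\tau_A}(X_{\tau_A},y)\mathbf{1}_{\{\tau_A<T\}}\big]\le G^T(x_0,y)$ and the residual time $T-\tau_A$ may be arbitrarily small --- but you then leave its resolution as a ``most likely'' space-time or balayage argument, so the right-hand inequality of \eqref{eq-hittingprob} is not actually proved. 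The paper's resolution is elementary and is precisely what dictates the shape of $\Lambda$: run the entrance-time decomposition for the occupation time on $[0,2T]$ instead of $[0,T]$. Since $\tau\wedge T\le T$, the residual window $2T-(\tau\wedge T)$ always has length at least $T$, so the post-hitting occupation is bounded below using the Cheeger-Yau bound \eqref{eq_CY} integrated over $[0,T]$, while the total occupation on $[0,2T]$ is bounded above by integrating the Li-Yau bound \eqref{eq_LY} over $[0,2T]$; this is exactly why $v$ in \eqref{eq-v'} is taken at scale $\sqrt{2T}$ and why the numerator of $\Lambda$ is the full integral $\int_0^\infty\xi^{\frac n2-2}e^{-\xi}\,d\xi$.

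A secondary problem with your upper-bound route is its reliance on the $K$-equilibrium measure and on the claim that its potential is at least $E_K(\nu)$ on $A$: for the non-symmetric Martin kernel neither the existence of a minimizer nor such a Frostman-type maximum principle is immediate, and nothing in your sketch supplies them. The paper (following Benjamini--Pemantle--Peres) avoids this entirely: it uses the distribution $\mu$ of $X_{\tau\wedge T}$ itself as the test measure in the definition \eqref{eq-martincap}. From the occupation-time comparison one gets $\int_A K(x,y)\,d\mu(x)\le\Lambda$ for every $y\in A$ (first for $dV$-a.e.\ $y$, then for all $y$ by a regularization of the kernel); integrating in $d\mu(y)$ and normalizing $\mu$ to a probability measure gives $\mu(A)\le\Lambda\,\mathrm{Cap}_K(A)$ directly, which is the desired upper bound since $\mu(A)$ is the hitting probability. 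If you rework your first-moment step along these two lines (doubled time horizon, hitting measure as test measure), your outline becomes a complete proof matching the stated constant.
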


\begin{remark}
\begin{enumerate}[(i)]
\item If $M=\mathbb{R}^n$ is flat Euclidean space, then we can choose $\gamma=4$, $C_{\gamma}=1, v=\omega_n$ and $T=\infty$, and hence recover the sharp estimate
\begin{equation*}
 \frac{1}{2}  \mathrm{Cap}_{K}(A)\le \mathbb{P}_{x_0}[ B_t \in  A \textrm{ for some }0<t<\infty] \le  \mathrm{Cap}_{K}(A)
\end{equation*}
from Benjamini-Pemantle-Peres \cite{BPP}.
\item In practice, one usually chooses $T\gg\mathrm{diam}(A\cup \{x_0\})^2$ so that $\Lambda\approx   \frac{ \omega_n}{v} \left ( \frac{ \gamma}{4}\right )^{\frac n 2 -1 }  C_\gamma$. In particular, if $(M,g)$ has nonvanishing asymptotic volume ratio one can simply choose $T=\infty$.
\item Assuming $\text{Vol}(B(x_0,r_0)) \geq v_0r_0^n$, by Bishop-Gromov volume comparison we of course have
\begin{equation*}
v\geq \left( \frac{r_0}{\max\{\sqrt{2T},r_0+\mathrm{diam}(A\cup \{x_0\})\} } \right)^n v_0\, .
\end{equation*}
\end{enumerate}
\end{remark}

\bigskip

One of the main features of Theorem \ref{thm-hitting}, in contrast to prior hitting estimates on manifolds in the literature (see e.g. \cite{ChavelFeldman,GrigSal}), is that our estimate is universal in the sense that it only depends on a lower bound for the noncollapsing constant. Let us now discuss three applications.\\

Our first application concerns the question: How likely is it that Brownian motion comes close to the almost singular part of a Ricci-flat manifold?
 To discuss this, recall that for $x\in M$ the regularity scale is defined by
 \bea\label{eq-regularityscale}
 \mathrm{reg}(x):=\sup\big\{ r\leq 1\, : \, \sup_{B(x,r)} |\mathrm{Rm}|\leq r^{-2}\big\}.
 \eea
We consider the $\eps$-singular set 
 \bea\label{eq-esingular}
 S_\eps:= \{ x \in M\, : \, \mathrm{reg}(x)\leq \eps \},
 \eea
and its $\eps$-tubular neighborhood
 \bea
 N_\eps(S_\eps)=\{ x\in M \, : \, d(x,S_\eps)\leq \eps\}.
 \eea
Similarly, we consider the Wiener sausage
\bea
N_\eps(X[0,T])= \{ x\in M \, : \, d(x, X_t)\leq \eps \textrm{ for some } 0\leq t\leq T\}\, .
\eea
One can think of the Wiener sausage as the path swept out by a Brownian particle of size $\eps$. The following theorem estimates the probability that this particle hits the almost singular part $N_\eps(S_\eps)$:

\begin{theorem}\label{thm_app1}
Let $(M^n,g)$ be a complete Riemannian Ricci-flat manifold of dimension $n\geq 3$.  Then for all $0<\eps\leq 1$, $1\leq T<\infty$ and all $x_0\in M$ with $d(x_0,S_\eps) \ge 3\eps$ we have the estimate
\begin{equation}
\mathbb{P}_{x_0}\left[N_\eps(X[0,T])\cap N_\eps(S_\eps) \cap B(x_0,\sqrt T ) \neq \emptyset  \right] \le  \frac{ C\eps^2}{d(x_0,S_\eps)^{2}},
\end{equation}
where $C=C(n,v)<\infty$ only depends on the dimension, and a lower bound $v$ for $\frac{\mathrm{Vol}(B(x_0,\sqrt T))}{T^{n/2}}$.
\end{theorem}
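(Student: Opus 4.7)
My plan is to apply the hitting estimate of Theorem \ref{thm-hitting} to a compact set enclosing the portion of $N_\eps(S_\eps)$ visible to Brownian motion, and then bound the resulting Martin capacity via a dyadic radial decomposition combined with the sharp codimension four Minkowski estimate of Cheeger--Naber and Cheeger--Jiang--Naber for the $\eps$-singular set of (noncollapsed) Ricci-flat manifolds.

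As a first step, the event in question implies that $X_t$ enters the compact set
\bee
A := N_{2\eps}(S_\eps)\cap B(x_0, \sqrt T +\eps)
\eee
at some $0<t<T$. Since $0<\eps\leq 1 \leq \sqrt T$, one has $\mathrm{diam}(A\cup\{x_0\})^2/(4T)\leq 4$, and Bishop--Gromov propagates the noncollapsing at $x_0$ to the other points of $A$, so the constant $\Lambda$ from \eqref{eq-C2} satisfies $\Lambda \leq C(n)/v$. Writing $d := d(x_0, S_\eps)\geq 3\eps$, I decompose $A$ into dyadic radial pieces $S_k$ on which $d(x_0,\cdot) \asymp 2^k d$, for $k=0,1,\ldots, K$ with $K \sim \log_2(\sqrt T/d)$; every $y\in A$ satisfies $d(x_0,y)\geq d-2\eps \geq d/3$, so this partition is well-defined.

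On each shell, the sharp codimension four Minkowski estimate
\bee
\mathrm{Vol}(N_{2\eps}(S_\eps)\cap B(x_0, 2^{k+1}d))\leq C(n,v)(2^k d)^{n-4}\eps^4
\eee
combined with the Bishop--Gromov lower bound $\mathrm{Vol}(B(y,\eps))\geq c(n,v)\eps^n$ and a Vitali argument produces a covering of $S_k$ by $N_k\leq C(n,v)(2^k d/\eps)^{n-4}$ balls $B(y_{k,i},\eps)$ centered in $S_k$. Testing the infimum in \eqref{eq-martincap} with any probability measure supported on such a ball, using $d(x_0,y)\asymp 2^k d$ there (a consequence of $d\geq 3\eps$) and the trivial $d(x,y)\leq 2\eps$, yields
\bee
\mathrm{Cap}_K(B(y_{k,i},\eps))\leq C(n)\Bigl(\frac{\eps}{2^k d}\Bigr)^{n-2}.
\eee

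Applying Theorem \ref{thm-hitting} to each individual ball and taking the union bound on events,
\bea
\mathbb{P}_{x_0}[X_t\in A \text{ for some } t<T]
&\leq \sum_{k=0}^{K}\sum_i \Lambda\,\mathrm{Cap}_K(B(y_{k,i},\eps)) \\
&\leq C(n,v)\sum_{k=0}^{K} \Bigl(\frac{2^k d}{\eps}\Bigr)^{n-4}\Bigl(\frac{\eps}{2^k d}\Bigr)^{n-2} = C(n,v)\,\frac{\eps^2}{d^2}\sum_{k\geq 0} 4^{-k} \leq C(n,v)\,\frac{\eps^2}{d^2}.
\eea

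The principal obstacle is the invocation of the sharp codimension four Minkowski bound, a deep quantitative result specific to the Einstein (Ricci-flat) setting; a substantially weaker volume estimate would yield only a suboptimal power of $\eps$. The dyadic decomposition provides a convenient way to assemble the Martin kernel contributions at each distance scale into the sharp $1/d^2$ rate, while performing the union bound at the level of probabilities cleanly sidesteps any question of subadditivity of $\mathrm{Cap}_K$.
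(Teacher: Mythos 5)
Your proof is correct and follows essentially the same route as the paper: reduce to hitting $N_{2\eps}(S_\eps)$, apply Theorem \ref{thm-hitting} with $\Lambda\leq C(n)/v$, decompose into dyadic annuli around $x_0$, and use the Jiang--Naber codimension-four volume bound plus noncollapsing to cover each piece by $\eps$-balls whose capacities sum to the geometric series giving $C\eps^2/d(x_0,S_\eps)^2$. The only cosmetic difference is in the bookkeeping: you aggregate via a union bound over balls at the level of hitting probabilities and bound $\mathrm{Cap}_K$ of each ball directly from the definition, whereas the paper uses subadditivity of $\mathrm{Cap}_K$, the comparison \eqref{eqMK} with the Newtonian capacity, and Lemma \ref{lem-capacity} for the capacity of $\eps$-balls.
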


The estimate in Theorem \ref{thm_app1} is sharp as illustrated by the following proposition:

\begin{proposition}\label{prop_EH}
If $(M,g)=(T^\ast S^2\times\mathbb{R}^{n-4}, g_\eps+\delta)$, where $g_\eps$ is the Eguchi-Hanson metric with central two-sphere of size $\eps$, then for all $x_0\in M$ with $3\eps\leq d(x_0,S_\eps)\leq \sqrt{T}/3$ we have the lower bound
\begin{equation}
\mathbb{P}_{x_0}\left[X[0,T]\cap S_\eps \cap B(x_0,\sqrt T ) \neq \emptyset  \right] \geq  \frac{ c\eps^2}{d(x_0,S_\eps)^{2}}.
\end{equation}
\end{proposition}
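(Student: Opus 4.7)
The plan is to deduce Proposition~\ref{prop_EH} from the lower bound in Theorem~\ref{thm-hitting} applied to a well-chosen compact subset $A$ of $S_\epsilon\cap B(x_0,\sqrt T)$. The first step is to identify $S_\epsilon$ on the model in question. A direct look at the Eguchi--Hanson metric shows that $|\mathrm{Rm}|(p)\sim \epsilon^{-2}$ on the bubble scale and decays like $\epsilon^4/r^6$ at distance $r\gtrsim\epsilon$ from the central $2$-sphere; feeding this into the definition of $\mathrm{reg}$ yields $\mathrm{reg}(p)\sim\max(r,\epsilon)$. Hence $S_\epsilon^{T^{\ast}S^2}$ is comparable to the $\epsilon$-neighborhood of the central sphere, in particular a $4$-dimensional region of $g_\epsilon$-volume $\sim\epsilon^4$. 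Since the curvature of a Riemannian product is block-diagonal and $\R^{n-4}$ is flat, $S_\epsilon=S_\epsilon^{T^{\ast}S^2}\times\R^{n-4}$. Writing $r_0:=d(x_0,S_\epsilon)$ and $x_0=(x_0^\perp,x_0^\parallel)$, I would take
\[
A:=S_\epsilon^{T^{\ast}S^2}\times\overline{B}^{n-4}(x_0^\parallel,r_0).
\]
The product distance formula gives $A\subset B(x_0,2r_0)\subset B(x_0,\sqrt T)$ and $d(x_0,y)\in[r_0,Cr_0]$ throughout $A$. Moreover $(M,g_\epsilon+\delta)$ is Ricci-flat with asymptotic volume ratio $\gtrsim \omega_n/2$ uniformly in $\epsilon$, so the constant $\Lambda$ from Theorem~\ref{thm-hitting} is bounded by a dimensional constant $\Lambda_n$.

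The second step is to prove $\mathrm{Cap}_K(A)\gtrsim \epsilon^2/r_0^2$ via a test measure. I would use the normalized Lebesgue measure $\nu$ on $A$, noting $\vol(A)\sim\epsilon^4 r_0^{n-4}$. Exploiting the splitting $d(x,y)^2=d_\perp(x_\perp,y_\perp)^2+|x_\parallel-y_\parallel|^2$ and integrating first in the $(n-4)$-dimensional shell variable yields
\[
\int_A \frac{dV(y)}{d(x,y)^{n-2}} \sim \int_{S_\epsilon^{T^{\ast}S^2}} \frac{dy_\perp}{d_\perp^2} \sim \epsilon^2,
\]
uniformly in $x\in A$: the inner shell integral (thanks to $r_0\geq 3\epsilon$) evaluates to a dimensional constant times $d_\perp^{-2}$, and the outer $d_\perp^{-2}$-integral on a $4$-dimensional region of diameter $\sim\epsilon$ contributes $\int r^3\cdot r^{-2}\,dr\sim\epsilon^2$. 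Combining with $d(x_0,y)\sim r_0$ gives
\[
\iint_{A\times A} K(x,y)\,d\nu(x)\,d\nu(y) \sim \frac{r_0^{n-2}\epsilon^2}{\vol(A)} \sim \frac{r_0^2}{\epsilon^2},
\]
whence $\mathrm{Cap}_K(A)\gtrsim \epsilon^2/r_0^2$. Since $\{X[0,T]\cap A\neq\emptyset\}\subset\{X[0,T]\cap S_\epsilon\cap B(x_0,\sqrt T)\neq\emptyset\}$, the lower bound in Theorem~\ref{thm-hitting} then closes the argument.

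The main obstacle is the capacity computation, which is essentially an $(n-2)$-Riesz energy on an anisotropic tube of thickness $\epsilon$ in four transverse directions and length $r_0$ in $n-4$ longitudinal directions; one must confirm that both scales enter with exponents that combine to give exactly $r_0^2/\epsilon^2$, rather than producing a logarithmic correction or a different power. The secondary point of care is the geometric identification of $S_\epsilon$, which follows from the standard asymptotic expansion of the Eguchi--Hanson metric; the case $n=4$ is handled by dropping the longitudinal factor and is otherwise identical.
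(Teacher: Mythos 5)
Your argument is correct and essentially the paper's own proof: the paper likewise applies the lower bound of Theorem \ref{thm-hitting} to the uniform measure on an $\eps$-thick, $r_0$-long tube inside $S_\eps$ (there taken as $Z=B^4(p,\eps)\times B^{n-4}(q,2r_0)$, with the energy bound phrased via Newtonian capacity and \eqref{eqMK} rather than directly on the Martin kernel), obtaining the same $\mathrm{Cap}_K\gtrsim \eps^2/r_0^2$ bound. Your version just spells out the identification of $S_\eps$ and the Riesz-energy computation that the paper leaves as ``an elementary computation.''
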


\bigskip

Our second application concerns weak solutions of the Einstein equations. To discuss this, recall that Naber \cite{Naber13} discovered a characterization of Ricci-flat metrics via a sharp gradient estimate on path space. Specifically, he proved that a Riemannian manifold $(M^n,g)$ solves the Einstein equations
\bea
\textrm{Ric}=0
\eea
if and only if the gradient estimate
\begin{equation}\label{grad_path}
|\nabla_x \mathbb{E}_x [F]| \leq \mathbb{E}_x [|\nabla^\parallel F|]
\end{equation}
holds for all test functions $F:PM\to\mathbb{R}$ on path space and almost every $x\in M$. Here, $\mathbb{E}_x$ denotes the expectation with respect to the Wiener measure of Brownian motion starting at $x\in M$, and $\nabla^\parallel F(X)\in T_{x}M$ denotes the parallel gradient, which for cylinder functions $F(X)=f(X_{t_1},\ldots,X_{t_k})$ is defined by
\bea
\nabla^\parallel F(X) = \sum_{i=1}^k P_{t_i}(X)\nabla^{(i)}f(X_{t_1},\ldots,X_{t_k})\, ,
\eea
where $P_t(X):T_{X_t}M\to T_xM$ denotes stochastic parallel transport. Motivated by this, Naber then introduced a notion of weak solutions of the Einstein equations for metric measure spaces. To describe this, let us first impose the convention that all metric measure spaces $(M,d,m)$ are assumed to be locally compact, complete, length spaces such that $m$ is a locally finite, $\sigma$-finite Borel measure with $\mathrm{spt}(m)=M$. Now, recall that on any metric measure space $(M,d,m)$ one has the Cheeger-energy
\begin{equation}
E[u] =\tfrac{1}{2}\int_M |\nabla u|_\ast^2\, dm,
\end{equation}
where $|\nabla u|_\ast$ denotes the minimal relaxed gradient \cite{AGS1}. If $E$ satisfies the parallelogram identity, then $M$ is called Riemannian. Any Riemannian metric measure space has a linear heat flow and can be equipped with a Wiener measure of Brownian motion characterized by \eqref{char_wiener},
see \cite{Fukushima}.
\begin{definition}[{c.f. Naber \cite{Naber13}}] \label{def-weakflat}
A Riemannian metric measure space $(M,d,m)$ is called a weak solution of the Einstein equations if the gradient estimate
\begin{equation}\label{grad_path2}
|\nabla_x \mathbb{E}_x [F]| \leq \mathbb{E}_x [|\nabla^\parallel F|]
\end{equation}
holds for all test functions $F:PM\to\mathbb{R}$ on path space and $m$-almost every $x\in M$, where $\nabla_x$ is the local Lipschitz slope and the parallel gradient is the one constructed in \cite[Section 14]{Naber13}.
\end{definition}

Using Theorem \ref{thm_app1} we give a proof of the following unpublished result of Naber:\footnote{RH thanks Aaron Naber for pointing out in 2014 that this follows from the solution of the codimension 4 conjecture by Cheeger-Naber \cite{CheegerNaber}, which in particular implies that the singular set has zero capacity.}

\begin{theorem}[Naber]\label{thm_ricci_limit} 
Every noncollapsed limit of Ricci-flat manifolds is a weak solution of the Einstein equations.
\end{theorem}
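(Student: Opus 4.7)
The plan is to transfer Naber's sharp gradient characterization of the Einstein equations from the smooth Ricci-flat approximants to the limit, using Theorem \ref{thm_app1} to ensure that Brownian motion on the limit almost surely stays in the regular set so that the parallel gradient in Definition \ref{def-weakflat} remains well defined. Let $(M_i^n,g_i,p_i)$ be Ricci-flat manifolds converging noncollapsed and pointed Gromov-Hausdorff to $(M_\infty,d_\infty,m_\infty,p_\infty)$. By the Cheeger-Naber codimension four theorem, the singular set $\mathcal{S}\subset M_\infty$ has Hausdorff codimension at least $4$, the regular set $\mathcal{R}$ has full $m_\infty$-measure, and at each $x\in\mathcal{R}$ the regularity scale $\mathrm{reg}(x)$ is strictly positive.

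First I would use Theorem \ref{thm_app1} to show that $\mathbb{P}^\infty_x$-almost every Brownian path stays in $\mathcal{R}$. Fixing $T<\infty$ and $x_i\to x\in\mathcal{R}$ along a Gromov-Hausdorff realization, the hitting estimate on each $M_i$ gives
\begin{equation*}
\mathbb{P}^i_{x_i}\bigl[N_\eps(X[0,T])\cap N_\eps(S^i_\eps)\cap B(x_i,\sqrt{T})\neq\emptyset\bigr]\leq \frac{C\eps^2}{\mathrm{reg}(x)^2}
\end{equation*}
for all $\eps\leq \tfrac{1}{3}\mathrm{reg}(x)$ and $i$ large, with $C$ independent of $i$ by the uniform noncollapsing and the fact that $S^i_\eps\to\mathcal{S}$ Gromov-Hausdorff. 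Passing $i\to\infty$ transfers this bound to $\mathbb{P}^\infty_x$, and letting $\eps\to 0$ yields the desired path avoidance of $\mathcal{S}$.

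Next, for a bounded Lipschitz cylinder function $F_\infty=f(X_{t_1},\ldots,X_{t_k})$ on $PM_\infty$, choose approximating cylinder functions $F_i$ on $PM_i$, and set $u_i(\cdot)=\mathbb{E}^i_\cdot[F_i]$ and $u(\cdot)=\mathbb{E}^\infty_\cdot[F_\infty]$. Naber's smooth characterization on each Ricci-flat $M_i$ gives $|\nabla u_i|(y_i)\leq \mathbb{E}^i_{y_i}[|\nabla^\parallel F_i|]$. By heat-kernel convergence in the noncollapsed setting, $u_i\to u$ locally uniformly on $\mathcal{R}$. Integrating the smooth gradient estimate along near-geodesics from $x_i$ to a nearby $y_i\to y$, dividing by $d(x,y)$, and letting $y\to x$ controls the local Lipschitz slope by
\begin{equation*}
|\nabla_x u|(x)\leq \limsup_{i\to\infty}\mathbb{E}^i_{x_i}[|\nabla^\parallel F_i|].
\end{equation*}
Finally, the avoidance established in the previous step allows the construction of \cite[Section 14]{Naber13} to identify the limiting parallel transport on $M_\infty$ along $\mathbb{P}_x^\infty$-almost every path, and dominated convergence (using the uniform bound from the Lipschitz constant of $f$) gives $\limsup_i\mathbb{E}^i_{x_i}[|\nabla^\parallel F_i|]\leq \mathbb{E}^\infty_x[|\nabla^\parallel F_\infty|]$. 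A standard density argument then extends from Lipschitz cylinder functions to the full class of test functions in Definition \ref{def-weakflat}.

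The hard part will be the passage of the parallel-gradient term to the limit: parallel transport on $M_i$ degenerates near the high-curvature locus, so the Cheeger-Naber codimension four result by itself only provides the qualitative information that $\mathcal{S}$ has capacity zero, which is insufficient to control the stochastic integrand. The quantitative avoidance in Theorem \ref{thm_app1} is exactly what upgrades this to a usable $L^\infty$-type bound along almost every path, making the dominated convergence step go through.
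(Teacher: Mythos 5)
Your first half follows the paper's strategy (use Theorem \ref{thm_app1} to show Brownian motion on the limit avoids the singular set), but the step where you ``pass $i\to\infty$'' is exactly where the technical work lies and it is missing. The convergence of Wiener measures available here comes from convergence of the heat kernels on the approximants (Ding), and this only controls finite-dimensional, i.e.\ cylinder, events; the event that a path meets $N_\eps(S_\eps)$ at \emph{some} time in $[0,T]$ is not a cylinder event, so the bound on $\mathbb{P}^i_{x_i}$ does not transfer to $\mathbb{P}^\infty_x$ by mere limsup. The paper bridges this by discretizing time: it compares the continuous-time hitting event with the event $\{X_{t_k}\in N_\eps(S_\eps)$ for some $t_k=kT/N\}$, controls the error between mesh points by the exit-time estimate of Lemma \ref{std_lemma} (the $Ne^{-\eps^2 N/100}$ term), transfers only the discrete-time event using heat-kernel convergence together with Anderson's $\eps$-regularity theorem to relate $N_\eps(S_\eps)$ on the limit to $N_{2\eps}(S^i_{2\eps})$ on the approximants, and then sends $N\to\infty$ before $\eps\to 0$. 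Your assertion that ``$S^i_\eps\to\mathcal{S}$ Gromov-Hausdorff'' is neither what is needed nor clearly true; what is needed is an inclusion of tubular neighborhoods in the correct direction, which is what the $\eps$-regularity theorem provides.

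The second half of your argument is a genuinely different route from the paper's, and its key step is asserted rather than proved. You propose to push Naber's smooth gradient estimate through the approximating sequence and close with ``dominated convergence'' to get $\limsup_i\mathbb{E}^i_{x_i}[|\nabla^\parallel F_i|]\leq \mathbb{E}^\infty_x[|\nabla^\parallel F_\infty|]$. But dominated convergence needs convergence of the integrands: the random variables $|\nabla^\parallel F_i|$ are functionals of Brownian paths on \emph{different} manifolds involving \emph{different} stochastic parallel transports, so the uniform Lipschitz bound gives domination but no convergence; what is required is a convergence theory for stochastic parallel transport along Brownian paths under measured Gromov-Hausdorff convergence, which is precisely the difficulty your closing paragraph names and then waves away. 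The paper avoids this entirely: once the avoidance claim is established, $\mathbb{P}^\infty_x$-almost every path lives in the regular part, which is a smooth (incomplete) Ricci-flat manifold, and since the singular set has zero capacity the functions $u=\mathbb{E}_\cdot[F]$ and $\nabla u$ have a well-behaved heat flow there; Naber's computation (\cite[Section 6.2]{Naber13}, or the simplified argument of \cite{HaslhoferNaber_martingales}) is then run \emph{intrinsically on the limit space}, with no comparison of parallel transports across the sequence. As written, your proposal leaves its central analytic step unproven, and you should either supply a parallel-transport convergence result or switch to the intrinsic argument.
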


For comparison, recall that by the theory of Lott-Villani \cite{LottVillani} and Sturm \cite{Sturm} lower bounds on Ricci-curvature are preserved under weak limits. While lower curvature bounds survive even in the collapsing case, for upper curvature bounds the noncollapsing condition is essential. 

\bigskip

For  simplicity, we have stated the results for the manifolds with non-negative Ricci curvature or Ricci-flat manifolds (or their limits). In fact, small modifications of those statements and proofs give analogous results for the manifolds with lower Ricci curvature bound or Einstein manifolds of nonzero Einstein constants (or their limits). We discuss these generalizations in Section \ref{sec-othercase}. 

\bigskip

Our final application concerns the question: How likely is it that two independent Brownian motions come close to each other? For Brownian motion in Euclidean space of dimension $n>4$ by work of Aizenman \cite{Aizenman} and Pemantle-Peres-Shapiro \cite{PPS} the probability that two independent Brownian particles of size $\eps$ starting apart hit each other is proportional to $\eps^{n-4}$. The following theorem generalizes this to manifolds with non-negative Ricci curvature with nonvanishing asymptotic volume ratio:

  \begin{theorem}\label{thm_eff_int} For any $n>4$ and $v>0$ there exists a constant $C=C(n,v)<\infty$ with the following significance. For any complete Riemannian manifold $(M,g)$ of dimension $n>4$ with $\mathrm{Ric}\geq 0$ and $\lim_{r\to \infty}\frac{\mathrm{Vol}(B(x,r))}{r^n}\geq v$, for any two points $x_0,\tilde x_0\in M$ with $d(x_0,\tilde x_0) > 5\eps$ we have
  \bea
C^{-1}  \frac{\eps^{n-4}}{d(x_0,\tilde x_0)^{n-4}} \le \mathbb{P}_{{x}_0,\tilde x_0}[N_\eps(X[0,\infty))\cap N_\eps(\tilde{X}[0,\infty)) \neq \emptyset ] \le C \frac{\eps^{n-4}}{d(x_0,\tilde x_0)^{n-4}}.\eea 
\end{theorem}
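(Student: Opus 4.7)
The plan is to condition on the path of $\tilde X$ and apply Theorem \ref{thm-hitting} to reduce the problem to estimating the expected Martin capacity of an $\eps$-tubular neighborhood of a single Brownian path. By the triangle inequality, the event $N_\eps(X[0,\infty))\cap N_\eps(\tilde X[0,\infty))\ne\emptyset$ is sandwiched between $\{X[0,\infty)\cap N_\eps(\tilde X[0,\infty))\ne\emptyset\}$ and $\{X[0,\infty)\cap N_{2\eps}(\tilde X[0,\infty))\ne\emptyset\}$. Since $\mathrm{Ric}\ge 0$ together with the asymptotic volume ratio hypothesis make Brownian motion transient and allow one to take $T=\infty$ in Theorem \ref{thm-hitting} with a constant $\Lambda=\Lambda(n,v)$ (the diameter-dependent factor in \eqref{eq-C2} disappears in the limit), writing $R:=d(x_0,\tilde x_0)$ the problem reduces to showing
\bee
\mathbb{E}_{\tilde x_0}\bigl[\mathrm{Cap}_K(N_{c\eps}(\tilde X[0,\infty)))\bigr] \asymp \frac{\eps^{n-4}}{R^{n-4}} \quad \text{for } c\in\{1,2\},
\eee
with constants depending only on $n$ and $v$.

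For the upper bound, the first ingredient is the single-ball estimate $\mathrm{Cap}_K(B(y,C\eps))\lesssim (\eps/d(x_0,y))^{n-2}$ valid for $d(x_0,y)\ge 2C\eps$. This comes from the two-sided Green's function bounds $G(x,y)\asymp d(x,y)^{2-n}$, obtained by integrating the Cheeger-Yau lower bound \eqref{eq_CY} and the Li-Yau upper bound \eqref{eq_LY} against the Euclidean-type volumes provided by the asymptotic volume ratio assumption and Bishop-Gromov. The Martin capacity is countably subadditive (a convex-combination argument works since $K\ge 0$), so discretizing $\tilde X$ at times $t_j=j\eps^2$ and covering $N_{2\eps}(\tilde X[t_j,t_{j+1}])$ by $B(\tilde X_{t_j},C\eps)$, with Gaussian tails handling large increments, yields
\bee
\mathbb{E}[\mathrm{Cap}_K(N_{2\eps}(\tilde X))] \lesssim \eps^{-2}\int_0^\infty \mathbb{E}_{\tilde x_0}\bigl[(\eps/d(x_0,\tilde X_s))^{n-2}\bigr]\,ds = \eps^{n-4}\int_M \frac{G(\tilde x_0,z)}{d(x_0,z)^{n-2}}\,dz.
\eee
Substituting $G(\tilde x_0,z)\asymp d(\tilde x_0,z)^{2-n}$ turns the right-hand side into a Riesz-type convolution on $M$; splitting into regions near $x_0$, near $\tilde x_0$, and far away, volume comparison shows it is $\asymp R^{4-n}$ for $n>4$.

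For the lower bound, I construct an explicit probability measure $\nu_{\tilde X}$ supported on $N_\eps(\tilde X[0,\infty))$ and use $\mathrm{Cap}_K\ge 1/\mathcal{E}_K(\nu_{\tilde X})$ together with the Jensen inequality $\mathbb{E}[1/\mathcal{E}_K(\nu_{\tilde X})]\ge 1/\mathbb{E}[\mathcal{E}_K(\nu_{\tilde X})]$. A natural choice is the $\eps$-smoothed occupation measure
\bee
\nu_{\tilde X}(B) = \frac{1}{R^2}\int_0^{R^2}\frac{\mathrm{Vol}(B\cap B(\tilde X_s,\eps))}{\mathrm{Vol}(B(\tilde X_s,\eps))}\,ds,
\eee
supported on $N_\eps(\tilde X[0,R^2])$. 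Expanding $\mathcal{E}_K(\nu_{\tilde X})$ and taking expectations via the Markov property, the double time integral converts to a double Green's-function convolution, and the dominant contribution of times $s,t\lesssim R^2$ scales as $(R/\eps)^{n-4}$, matching the upper bound.

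The main obstacle is establishing the single-ball Martin capacity estimate and the Riesz convolution bound $\int d(x,z)^{2-n}d(y,z)^{2-n}\,dz\asymp d(x,y)^{4-n}$ on $M$ with constants depending only on $n$ and $v$; both hinge on the two-sided Green's function comparability $G\asymp d^{2-n}$, which is where the asymptotic volume ratio assumption enters essentially. Handling the rare large Brownian excursions in the time discretization of the upper bound, and verifying that the smoothed-measure energy computation in the lower bound produces the matching scaling, are technical but standard.
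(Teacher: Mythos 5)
Your reduction is the same as the paper's (condition on $\tilde X$, apply Theorem \ref{thm-hitting} with $T=\infty$, and note that the AVR hypothesis makes $\Lambda=\Lambda(n,v)$), but both halves of the capacity estimate are carried out by a genuinely different, more classical Aizenman/Pemantle--Peres--Shapiro-style route, and I believe your route works. For the upper bound, the paper never discretizes the path in time: it converts Martin capacity into volume via $\mathrm{Cap}_{K}\le d(x_0,\cdot)^{2-n}\mathrm{Cap}_N\le C d(x_0,\cdot)^{2-n}\eps^{-2}\mathrm{Vol}(N_\eps(\cdot))$, computes the expected sausage volume in a region by integrating the ball-hitting probability (again Theorem \ref{thm-hitting}), and then sums over spatial dyadic annuli around $x_0$ and $\tilde x_0$; you instead use countable subadditivity of $\mathrm{Cap}_K$, a single-ball capacity bound, and a time discretization at scale $\eps^2$, which turns the sum into the Riesz convolution $\eps^{n-4}\int_M G(\tilde x_0,z)\,d(x_0,z)^{2-n}dV(z)\asymp \eps^{n-4}R^{4-n}$. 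The two are morally equivalent (expected sausage volume is essentially $\eps^{n-2}\int G$), but the paper's version avoids the discretization tails you must control, while yours avoids the volume-capacity comparison and makes the $R^{4-n}$ scaling transparent as a Green's function convolution. For the lower bound, the paper puts the \emph{uniform volume measure} on $N_\eps(\tilde X)\cap A$ for the fixed annulus $A=B(x_0,4r_0)\setminus B(x_0,2r_0)$ and uses Cauchy--Schwarz over the randomness, $\mathbb{E}[V^2/\mathcal{E}]\ge \mathbb{E}[V]^2/\mathbb{E}[\mathcal{E}]$, so it needs both a first-moment lower bound on the sausage volume and a two-ball (strong Markov) second-moment bound; you put the smoothed occupation measure of $\tilde X|_{[0,R^2]}$ on the sausage and use Jensen, which dispenses with the volume lower bound but requires you to control the correlation between the factor $d(x_0,\tilde X_t)^{n-2}$ in the Martin kernel and $\max(d(\tilde X_s,\tilde X_t),\eps)^{2-n}$; this is a straightforward Markov-property/heat-kernel computation under the AVR hypothesis, giving $\mathbb{E}[\mathcal{E}_K]\lesssim R^{n-4}\eps^{4-n}$ as you say. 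Two small points: your single-ball bound $\mathrm{Cap}_K(B(y,C\eps))\lesssim(\eps/d(x_0,y))^{n-2}$ follows most directly from bounding the energy of an arbitrary probability measure below using $K(x,z)\ge c\,d(x_0,y)^{n-2}\mathrm{diam}^{2-n}$ (equivalently the paper's \eqref{eqMK} together with \eqref{eq_cap_ball}), rather than from Green's function asymptotics; and your discretization must handle the rare steps where the path comes within $O(\eps)$ of $x_0$, where the pointwise ball bound degenerates but the expectation remains finite since $d(x_0,\cdot)^{2-n}$ is locally integrable.
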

 
Again the estimate is universal in the sense that it only depends on a lower bound for the asymptotic volume ratio.\\

This article is organized as follows: In Section \ref{sec_hitting}, we prove the hitting estimate. To do so, we generalize the approach by Benjamini-Pemantle-Peres \cite{BPP} to the manifold setting, using in particular the heat kernel estimates from Cheeger-Yau and Li-Yau. The upper bound is derived via an entrance time decomposition, while the lower bound relies on a second moment estimate. In Section \ref{sec_app}, we prove the theorems from our three applications. To prove Theorem \ref{thm_app1}, we have to estimate the Martin capacity of the almost singular set $N_\eps(S_\eps)$. A key ingredient for this is the work of Jiang-Naber \cite{JiangNaber}. To prove Theorem \ref{thm_ricci_limit}, we show that almost every path of Brownian motion starting at a regular point stays in the regular part, and then conclude as in \cite{Naber13}. To prove Theorem \ref{thm_eff_int}, we have to estimate the expected Martin capacity of the Wiener sausage. We do this via suitable dyadic decompositions and repeated applications of Theorem \ref{thm-hitting}. Finally, in Section \ref{sec-othercase}, we prove variants of Theorem \ref{thm-hitting} and Theorem \ref{thm_app1} for the manifolds with lower Ricci bound and two-sided Ricci bounds, respectively. Moreover, as a generalization of Theorem \ref{thm_ricci_limit}, we prove every noncollapsed limit of Einstein manifolds with Ricci bounds has Ricci curvature bounds in generalized sense of \cite{Naber13}.  

\bigskip

\noindent\textbf{Acknowledgements.}
The second author has been supported by an NSERC Discovery Grant and a Sloan Research Fellowship.\\

\bigskip

\section{Proof of the hitting estimate}\label{sec_hitting}
 
\begin{proof}[Proof of Theorem \ref{thm-hitting}]

To prove the upper bound, we consider an entrance time decomposition. To this end, let
\begin{equation}
 \tau := \inf \{t>0\,:\, X_t \in A \} \in [0,\infty]
\end{equation}
be the first hitting time of $A$.  Note that $\tau \wedge T$ is a stopping time. Let $\mu$ be the distribution of $X_{\tau\wedge T}$, i.e.
\be  \mu (A') = \mathbb{P}_{x_0} [X_{\tau\wedge T}  \in A'].\ee 
Observe that
\be\label{property_mu}\mathbb{P}_{x_0}[ X_t \in  A \textrm{ for some } { 0<t< T} ]=\mu(A). \ee  
Now, for any Borel set $A'\subseteq A$ we consider the expected occupancy time
\be
\mathbb{E}_{x_0}\left[ \int_0^{2T} 1_{\{X_{t}\in A'\} } dt \right] =  \int_0^{2T}\int_{A'} \rho_t(x_0,y) \,  dV(y)\, dt \, .
\ee
By the strong Markov property, $X'_t = X_{\tau\wedge T +t}$ is a Brownian motion starting from $X_{\tau \wedge T}$. Using this and the lower heat kernel bound \eqref{eq_CY} from Cheeger-Yau we can estimate
\bea\label{eq-a2}
\mathbb{E}_{x_0}\left[ \int_0^{2T} 1_{\{X_{t}\in A'\} } dt \right]&\geq \mathbb{E}_{x_0}\left[ \int_{\tau\wedge T} ^{2T} 1_{X_{t}\in A' } dt \right]\\
 &= \int_M \mathbb{E}_x\left[ \int_0 ^{2T-(\tau\wedge T)} 1_{X'_{t}\in A' } dt \right] d\mu(x)\\ 
&=  \int_M \int_{A'} \int _0^{(2T-\tau) \vee  T}\rho_t(x,y)\,dt \, dV(y)\, d\mu(x)  \\
&\ge\int_A \int_{A'} \int _0^{  T} \frac{1}{(4\pi t)^{n/2}} e^{-\frac{ d(x,y)^2}{4t}}\, dt \, dV(y)\, d\mu(x). 
\eea 
On the other hand, by the upper heat kernel bound \eqref{eq_LY} from Li-Yau we can estimate
 \bea  \mathbb{E}_{x_0}\left[ \int_0^{2T} 1_{\{X_{t}\in A'\} } dt \right]\le \int_{A'} \int_{0}^{2T}  \frac{ \omega_n}{v}\frac{C_\gamma }{(4\pi t)^{n/2}} e^{-\frac{d(x_0,y)^2}{\gamma t}}\, dt \, dV(y)\, .
 \eea
Since $A'$ was arbitrary, combining the above inequalities we infer that
\bea
\int_A  \int _0^{  T} \frac{1}{(4\pi t)^{n/2}} e^{-\frac{ d(x,y)^2}{4t}}\, dt \, d\mu(x)\leq \int_{0}^{2T}  \frac{ \omega_n}{v}\frac{C_\gamma }{(4\pi t)^{n/2}} e^{-\frac{d(x_0,y)^2}{\gamma t}}\, dt\, 
\eea
for $dV$-almost every $y\in A$. Via substitution, we can rewrite this in the form
\bea\label{cf_eq28}
\int_A  \int_{\frac{d(x,y)^2}{4T}}^\infty \frac{\xi^{\frac{n}{2}-2}}{d(x,y)^{n-2}} e^{- \xi}\, d\xi \, d\mu(x)\leq \frac{  \omega_n}{v} \left ( \frac{ \gamma}{4}\right )^{\frac n 2 -1 } C_\gamma \int_{\frac{d(x,y)^2}{2\gamma T}}^\infty  \frac{\xi^{\frac{n}{2}-2}}{d(x_0,y)^{n-2}} e^{- \xi}\, d\xi \, .
\eea
Recalling the definition of the Martin kernel \eqref{eq_martin}, this shows that
\bea\label{eq-bda}
\int_A K(x,y) \, d\mu(x)\leq \frac{  \omega_n}{v} \left ( \frac{ \gamma}{4}\right )^{\frac n 2 -1 }C_\gamma \left[\frac{  \int_{0}^\infty   \xi^{\frac{n}{2} -2} e^{-\xi} d\xi } { \int_{ \frac{\mathrm{diam}(A)^2}{4T}}^\infty  \xi^{\frac{n}{2} -2} e^{-\xi} d\xi } \right]\, .
\eea
At first, this only follows for $dV$-almost every $y\in A$, but considering the regularized kernel $K_\eps(x,y)=\left(\frac{d(x_0,y)}{d(x,y)+\eps}\right)^{n-2}$ and sending $\eps\to 0$ we see that \eqref{eq-bda} in fact holds for every $y\in A$.
Finally, integrating \eqref{eq-bda}  with respect to $d\mu(y)$ we conclude that
\bea
\int_{A\times A} K(x,y) \, d\mu(x)\, d\mu(y) \leq \Lambda \mu(A).
\eea
Recalling \eqref{property_mu}, this proves the upper bound
\bea
\mathbb{P}_{x_0}[ X_t \in  A \textrm{ for some } { 0<t< T} ]\leq \Lambda \mathrm{Cap}_K(A).
\eea
\bigskip

To prove the lower bound, we use a second moment estimate. By general properties of $\mathrm{Cap}_K$, we can assume without loss of generality that $x_0\notin A$.\footnote{Otherwise, consider $A\setminus B(0,1/i)$ and use that $\lim_{i\to\infty} \mathrm{Cap}_K(A\setminus B(x_0,1/i))=\mathrm{Cap}_K(A)$.} Given any probability measure $\nu$ on $A$, let us consider the random variable 
\bea Z_\e:= \int_A \frac{\int_0^{T}1_{\{X_t \in B(y,\e) \}}dt}{\mathbb{E}_{x_0} [\int_0^{T}1_{\{X_t \in B(y,\e)\}}dt ]}\, d\nu(y). \eea
 By construction, we have $\mathbb{E}_{x_0} [Z_\e] =1$. The random variable $Z_\e$ measures a weighted occupancy time of the $\e$-tubular neighborhood of $N_\e(A)$, in particular
 \bea\label{inpart_est} \mathbb{P}_{x_0} [X_t \in N_\e(A)\text{ for some } 0 < t< T] & \ge  \mathbb{P}_{x_0}[ Z_\e >0]\\
& \ge \frac{1}{\mathbb{E}_{x_0}[Z_\e^2]},
 \eea
 where we used the Cauchy-Schwarz inequality in the last step. With the goal of deriving an upper bound for $\mathbb{E}_{x_0}[Z_\e^2]$, let us define the functions
 \bea h_\e (x,y) :=   \mathbb{E}_{x}\left[ \int_0^T 1_{\{X_t \in B(y,\e)\}} \, dt\right], \eea
and
\bea h^*_\e (x,y):= \sup_{z\in B(x,\e)} h_\e(z,y). \eea 
Using the strong Markov property we can now estimate
\bea\label{eq_sec_mom_est} \mathbb{E}_{x_0}[Z^2_\e ] &=2\mathbb{E}_{x_0}\left[\int_A \int_A  \int_0^{T}  \int_s^{T}\frac{1_{\{X_t \in B(y,\e)\}\cap\{X_s\in B(x,\e) \} }}{h_\e(x_0,y)h_\e(x_0,x) } \,dt \,ds\right] d\nu(x)\, d\nu(y)  \\
&\le  2\mathbb{E}_{x_0}\left[ \int_A \int_A  \int_0^{T} 1_{\{X_s\in B(x,\e) \} }\frac{ h^*_\e(x,y)}{h_\e(x_0,y)h_\e(x_0,x) }   \right] ds\, d\nu(x)\, d\nu(y)\\
&= 2   \int_A \int_A \frac{ h^*_\e(x,y)}{h_\e(x_0,y)}  \, d\nu(x)\, d\nu(y) .\eea 
To proceed, we need the following claim.
\begin{claim}\label{lemma_claim} We have \bea\label{eq-claim2} \lim_{\e\searrow 0}    \int_A \int_A \frac{ h^*_\e(x,y)}{h_\e(x_0,y)} \, d\nu(x)\, d\nu(y) = \int_A \int_A \frac{\int_0^{T} \rho_t(x,y) \, dt }{\int_0^{T} \rho_t(x_0,y) \, dt} \,d\nu(x)\, d\nu(y).\eea \end{claim}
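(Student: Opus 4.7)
The plan is to derive the claim via pointwise convergence combined with a dominated convergence argument. By Fubini's theorem one has $h_\eps(x,y) = \int_{B(y,\eps)} G^T(x,w)\,dV(w)$, where $G^T(x,z):=\int_0^T \rho_t(x,z)\,dt$ is the truncated Green's function. Since the heat kernel $\rho_t$ is smooth on $M\times M$ away from the diagonal, $G^T$ is continuous there, and hence for any fixed $x,y$ with $y\notin\{x,x_0\}$ we have both $h_\eps(x_0,y)/\text{Vol}(B(y,\eps)) \to G^T(x_0,y)$ and $h^*_\eps(x,y)/\text{Vol}(B(y,\eps)) \to G^T(x,y)$ as $\eps\searrow 0$. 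Consequently the integrand on the left-hand side converges pointwise on $\{x\neq y,\, x_0\neq y\}$ to the integrand on the right-hand side.

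The key step will be a uniform domination $h^*_\eps(x,y)/h_\eps(x_0,y) \le C\,K(x,y)$ for $\eps$ small, with $C$ depending on $n$, $v$, $T$ and $\diam(A\cup\{x_0\})$. In the regime $d(x,y)\ge 3\eps$, one argues as in the derivation of \eqref{cf_eq28}: substituting $\xi=d^2/(4t)$ (respectively $d^2/(\gamma t)$), applying Li-Yau \eqref{eq_LY} to the numerator and Cheeger-Yau \eqref{eq_CY} to the denominator, one finds both scale linearly in $\text{Vol}(B(y,\eps))$, which cancels in the ratio and leaves the Martin kernel $d(x_0,y)^{n-2}/d(x,y)^{n-2}$ up to universal constants. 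In the near-diagonal regime $d(x,y)<3\eps$, I would split the time integral at $t=\eps^2$: on $(0,\eps^2)$ use the trivial bound $\int_{B(y,\eps)}\rho_t(z,w)\,dV(w)\le 1$, and on $(\eps^2,T)$ use the uniform Li-Yau bound $\rho_t\le C/(v\,t^{n/2})$ (with Bishop-Gromov propagating noncollapsing from $x_0$ to the relevant balls) together with $\text{Vol}(B(y,\eps))\le \omega_n\eps^n$ to conclude $h^*_\eps(x,y)\le C\eps^2$. Combining this with the Cheeger-Yau lower bound on $h_\eps(x_0,y)$ and the Bishop-Gromov noncollapsing $\text{Vol}(B(y,\eps))\gtrsim \eps^n$, together with the trivial inequality $K(x,y)\ge d(x_0,y)^{n-2}/(3\eps)^{n-2}$ valid in this regime, once again yields $h^*_\eps(x,y)/h_\eps(x_0,y)\le C K(x,y)$.

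Finally, since we may assume $\text{Cap}_K(A)>0$ (otherwise the lower bound being proved is trivial), the claim is applied to $\nu$ of finite Martin energy $\iint K\,d\nu\,d\nu<\infty$. Then $\nu\otimes\nu$ assigns zero mass to the diagonal, and the uniform bound of the previous paragraph provides an integrable dominant, so the dominated convergence theorem together with the pointwise convergence yields the claim; the degenerate case of infinite Martin energy is handled by Fatou's lemma and comparability with $K$, giving $+\infty=+\infty$. The main obstacle is the near-diagonal analysis in the uniform bound: there $\text{Vol}(B(y,\eps))$ and $d(x,y)^{n-2}$ cannot be separated cleanly, and the time-splitting at $t=\eps^2$ combined with noncollapsing is essential to prevent the ratio from growing faster than the Martin kernel.
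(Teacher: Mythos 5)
Your proposal is correct and follows essentially the same route as the paper: pointwise convergence of the ratio plus a uniform bound $h^*_\e(x,y)/h_\e(x_0,y)\le C\,K(x,y)$ (equivalent, via the two-sided Green function estimate \eqref{green_eq}, to the paper's dominant $C\,{\int_0^T\rho_t(x,y)dt}/{\int_0^T\rho_t(x_0,y)dt}$), split into the regimes $d(x,y)\ge 3\e$ and $d(x,y)<3\e$, then dominated convergence. The only cosmetic difference is in the near-diagonal bound $h^*_\e\le C\e^2$, which you obtain by splitting the time integral at $t=\e^2$ and using the on-diagonal Li--Yau bound, whereas the paper integrates the Green function over $B(z,5\e)$ via the coarea formula and Bishop--Gromov; both arguments are valid and of comparable length.
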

\begin{proof}[{Proof of Claim \ref{lemma_claim}}]
First observe that
 \[ \lim_{\e\searrow 0} \frac{   h_\e(x_0,y)}{\text{Vol}(B(y,\e))} = \int_0^{T} \rho_t(x_0,y)\, dt  \quad \textrm{and} \quad \lim_{\e\searrow 0}\frac{ h^*_\e(x,y)}{\text{Vol}(B(y,\e))}  = \int_0^{T} \rho_t(x,y)\, dt.\]  
Hence, we have the pointwise convergence
\bea
\lim_{\e\searrow 0} \frac{ h^*_\e(x,y)}{h_\e(x_0,y)}=\frac{\int_0^{T} \rho_t(x,y)\, dt }{\int_0^{T} \rho_t(x_0,y)\, dt}\, .
\eea
To take the limit under the integral it suffices to find a constant $C<\infty$ such that 
\bea \label{eq-claimproof}\frac{h^*_\e(x,y)}{h_\e(x_0,y)} \le C \frac{\int_0^T \rho_t(x,y)\,dt}{\int_0^T \rho_t(x_0,y)\, dt}\eea
for $\e\leq \tfrac{1}{2}d(x_0,A)\wedge T^{1/2}$.  To this end, first observe that using the heat kernel estimates \eqref{eq_CY} and \eqref{eq_LY}  from Cheeger-Yau and Li-Yau, similarly as in \eqref{cf_eq28}, we obtain
\bea \label{green_eq}C^{-1} d(x,y)^{2-n} \le \int_0^T \rho_t(x,y)\, dt \le C d(x,y)^{2-n} \eea 
for all $x,y\in N_\eps(A)\cup\{x_0\}$, where $C$ depends on $n$, $v$, $T$ and $\textrm{diam}(A\cup\{x_0\})$. Therefore,  the ratio
\bea
\frac{   h_\e(x_0,y)}{\text{Vol}(B(y,\e))}= \frac{  1}{\text{Vol}(B(y,\e))}\int_{B(y,\e)} \int_0^T\rho_t(x_0,z)\, dt\, dV(z)
\eea
is uniformly bounded from above and below by positive constants. Thus, it suffices to show
\bea  \frac{h^*_\e (x,y)}{\mathrm{Vol}(B(y,\e))} \le C \int_0^T \rho_t(x,y)\, dt.\eea
This clearly holds if $d(x,y) \geq 3\e$. On the other hand, if $d(x,y) < 3\e$, then using the coarea formula and Bishop-Gromov volume comparison, for $z$ with $d(x,z)<\e$ we compute
\bea\int_{B(y,\e)} \int_0^{T}\rho_t(z,\zeta ) \, dt \, dV(\zeta) &\le \int_{B(z,5\e)}  \int_0^{T}\rho_t(z,\zeta) \, dt \, dV(\zeta) \\
 & \le C \int_0^{5\e} \int_{\p B(z,r)}  d(z,\zeta)^{2-n}  d\mathcal{H}^{n-1}(\zeta) \, dr   \\
  &\le C \text{Vol}(B(y,\e))\int_0^T \rho_t(x,y) \, dt.\eea 
This finishes the proof of the claim.
\end{proof}
Continuing the proof of the theorem, by estimate \eqref{eq_sec_mom_est} and Claim \ref{lemma_claim} we have
\bea
\limsup_{\e\searrow 0}\mathbb{E}_{x_0}[Z^2_\e ]\leq2 \int_A\int_A  \frac{\int_0^{T} \rho_t(x,y) \, dt }{\int_0^{T} \rho_t(x_0,y) \, dt} \,d\nu(x)\, d\nu(y).
\eea
Since 
\bea  \frac{\int_0^{T} \rho_t(x,y) \, dt }{\int_0^{T} \rho_t(x_0,y) \, dt} 
&\le   \frac{ \omega_n}{v} \left ( \frac{ \gamma}{4}\right )^{\frac n 2 -1 }  C_\gamma  \left[ \frac{ \int_{{0}}^ \infty \xi^{\frac n2-2}e^{-\xi}\, d\xi }{ \int_{\frac{\sup_{y\in A} d(x_0,y)^2}{4T} }^ \infty \xi^{\frac n2-2}e^{-\xi}\, d\xi } \right]K(x,y),\eea
we infer that
\bea \limsup_{\e\searrow 0} \mathbb{E}_{x_0}[Z^2_\e]  \le 2 \Lambda  \int_A\int_A K(x,y) \, d\nu(x) \, d\nu(y) .\eea 
Together with \eqref{inpart_est} and the continuity of Brownian paths this shows that
 \bea&\mathbb{P}_{x_0}[ X_t \in A \text{ for some } 0<t< T] \ge \frac{1}{2\Lambda }\left [  \int_A \int_A K(x,y) \, d\nu(x) \, d\nu(y) \right]^{-1}.\eea
 Since the probability measure $\nu$ was arbitrary, this proves the lower bound
  \bea&\mathbb{P}_{x_0}[ X_t \in A \text{ for some } 0<t<T] \ge \frac{1}{2\Lambda }\mathrm{Cap}_K(A),\eea
  which concludes the proof of the theorem.
\end{proof}

 \bigskip

\section{Applications}\label{sec_app}

\subsection{Probability to come close to almost singular part}\label{sec_appl1}

In this section, we prove Theorem \ref{thm_app1}.  Recall that for any metric space $(X,d)$, the $\alpha$-dimensional Hausdorff measure is defined by
\bea
  \mathcal{H}^\alpha (A)= \lim_{\e\searrow 0} \mathcal{H}^\alpha_\e (A),
 \eea 
  where 
 \bea \mathcal{H}^\alpha_\e (A) := \inf \left \{ \sum_i \mathrm{diam}(A_i)^\alpha \,:\,A \subseteq \bigcup_i A_i \text{ and } \mathrm{diam}(A_i)\le \e  \right\} .\eea 
We will need the following lemma relating capacity and Hausdorff measure.

\begin{lemma} \label{lem-capacity}Let $(X,d)$ be a metric space and $A\subseteq X$ be a Borel set. Then
\bea \label{eq-hausdorffcap}\mathcal{H}_{\e}^{\alpha} (A) \ge  \frac{ \mu(A)^2}{\iint_{\{d(x,y)\le \e\}} d(x,y)^{-\alpha } d\mu(x) d\mu(y) }\eea for any $\alpha>0$ and any nonzero finite (Borel) measure $\mu$ on A.
\end{lemma}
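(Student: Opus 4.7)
The plan is a standard Frostman-type argument. Let me fix a cover $\{A_i\}_{i}$ of $A$ with $\mathrm{diam}(A_i)\le \e$, and my goal is to show
\be
\sum_{i} \mathrm{diam}(A_i)^{\alpha}\ \ge\ \frac{\mu(A)^2}{\iint_{\{d(x,y)\le \e\}} d(x,y)^{-\alpha}\, d\mu(x)\,d\mu(y)} ;
\ee
taking the infimum over all such covers then yields the lemma.

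First I would reduce to the case where the $A_i$ are pairwise disjoint Borel subsets of $A$. Replacing $A_i$ by $A_i\cap A$ only decreases the diameter, and then the disjointification $B_i := (A_i\cap A)\setminus \bigcup_{j<i}(A_j\cap A)$ again covers $A$, satisfies $B_i\subseteq A$, has $\mathrm{diam}(B_i)\le \mathrm{diam}(A_i)$, and is pairwise disjoint. So I may as well assume the $A_i$ themselves are disjoint Borel subsets of $A$ with $\mathrm{diam}(A_i)\le \e$.

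Next, the main step is a Cauchy--Schwarz estimate. Writing $\mu(A)=\sum_i \mu(A_i)$ and factoring
\be
\mu(A_i) = \mathrm{diam}(A_i)^{\alpha/2}\cdot \mathrm{diam}(A_i)^{-\alpha/2}\mu(A_i) ,
\ee
Cauchy--Schwarz gives
\be
\mu(A)^2 \le \Bigl(\sum_i \mathrm{diam}(A_i)^{\alpha}\Bigr)\Bigl(\sum_i \mathrm{diam}(A_i)^{-\alpha}\mu(A_i)^2\Bigr).
\ee
For each $i$, since $d(x,y)\le \mathrm{diam}(A_i)$ whenever $x,y\in A_i$, I have the pointwise bound $\mathrm{diam}(A_i)^{-\alpha}\le d(x,y)^{-\alpha}$ on $A_i\times A_i$, hence
\be
\mathrm{diam}(A_i)^{-\alpha}\mu(A_i)^2 \le \iint_{A_i\times A_i} d(x,y)^{-\alpha}\, d\mu(x)\,d\mu(y).
\ee

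Finally, I would sum over $i$. Because the $A_i$ are pairwise disjoint subsets of $A$ with $\mathrm{diam}(A_i)\le \e$, the sets $A_i\times A_i$ are pairwise disjoint subsets of $\{(x,y)\in A\times A : d(x,y)\le \e\}$, so
\be
\sum_i \iint_{A_i\times A_i} d(x,y)^{-\alpha}\, d\mu(x)\,d\mu(y) \le \iint_{\{d(x,y)\le \e\}} d(x,y)^{-\alpha}\, d\mu(x)\,d\mu(y).
\ee
Combining the last three displays proves the claimed inequality for the chosen cover, and taking the infimum gives the lemma. There is no real obstacle here; the only thing to be slightly careful about is the disjointification step, since the sets $A_i$ in the definition of $\mathcal{H}^{\alpha}_{\e}$ are a priori arbitrary and need not lie in $A$ or be disjoint.
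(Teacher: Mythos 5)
Your proof is correct and is essentially the paper's own argument: the same Cauchy--Schwarz factorization $\mu(A_i)=\mathrm{diam}(A_i)^{\alpha/2}\,\mathrm{diam}(A_i)^{-\alpha/2}\mu(A_i)$, the pointwise bound $d(x,y)\le\mathrm{diam}(A_i)$ on $A_i\times A_i$, and the disjointness of the $A_i\times A_i$ inside $\{d(x,y)\le\e\}$. The only difference is that you spell out the reduction to a disjoint Borel cover contained in $A$, which the paper simply assumes at the outset.
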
 

\begin{proof}[{Proof of Lemma \ref{lem-capacity}}] If $A_i$ is a countable disjoint covering of $A$ whose diameters are bounded by $\e$, then
\bea \mu(A)^2 & \le  \left[\sum _i\mathrm{diam}(A_i)^{\alpha/2} \mathrm{diam}(A_i)^{-\alpha/2}   \mu(A_i)\right]^2 \\
&\le  \sum_i \mathrm{diam}(A_i)^\alpha  \sum_i \mathrm{diam}(A_i)^{-\alpha} \mu(A_i)^2  \\
& \le \sum_i \mathrm{diam}(A_i)^\alpha  \sum_i \int_{A_i}\int_{A_i}d(x,y)^{-\alpha} d\mu(x) d\mu(y)  \\
&\le \sum_i \mathrm{diam}(A_i)^\alpha \iint_{ \{d(x,y) \le \e \} } d(x,y)^{-\alpha} d\mu(x) d\mu(y).\eea
This proves the lemma.
\end{proof}

We can now prove the main result of this section:

\begin{proof}[Proof of Theorem \ref{thm_app1}]

First observe that
 \bea
 N_\eps(X[0,T])\cap N_\eps(S_\eps)\neq \emptyset\quad \Rightarrow \quad X_t\in N_{2\eps}(S_\eps) \textrm{ for some } 0<t<T.
 \eea
  Hence, using Theorem \ref{thm-hitting} we can estimate
 \bea
\mathbb{P}_{x_0}[ N_\eps(X[0,T])\cap N_\eps(S_\eps)\cap B(x_0,\sqrt{T})\neq \emptyset] \leq \Lambda \textrm{Cap}_K(N_{2\eps}(S_\eps)\cap B(x_0,\sqrt{T})).
 \eea
To proceed, let us write $\textrm{Cap}_N$ for the capacity associated to the Newtonian kernel
\bea
N(x,y)=\frac{1}{d(x,y)^{n-2}}\, .
\eea
In general, the Martin capacity and the Newtonian capacity can be compared via 
 \bea\label{eqMK}\frac{\mathrm{Cap}_N(A)}{\sup_{x\in A} d(x_0,x)^{n-2}} \le \mathrm{Cap}_K(A) \le\frac{\mathrm{Cap}_N(A)}{\inf_{x\in A} d(x_0,x)^{n-2}} \eea 
To use this efficiently, let us chop up our set into dyadic annuli. Namely, set $r_i:= 2^i d(x_0, N_{2\eps}(S_\eps))$, let $k$ be the smallest integer such that $r_k\ge \sqrt{T}$, and estimate
  \bea \mathrm{Cap}_K(N_{2\eps}(S_\eps) \cap B(x_0,\sqrt{T}))& \le \sum_{i=1}^{k} \mathrm{Cap}_K(N_{2\eps}(S_\eps) \cap (B(x_0, r_i) \setminus B(x_0,r_{i-1}))) \\
  &\le  \sum_{i=1}^{k} \frac{1}{r_{i-1}^{n-2}} \mathrm{Cap}_N(N_{2\eps}(S_\eps) \cap B(x_0, r_i)). \\
  \eea
 Now by the deep work of Jiang-Naber \cite[Theorem 1.8]{JiangNaber}, we have
 \bea\label{JiangNaber_est}
 \mathrm{Vol}{(N_{2\e}(S_{\e}) \cap B(x_0,r_i))} \le C \e ^4r_i^{n-4},
 \eea
 where $C$ only depends on the dimension $n$, and a lower bound $v$ for $\mathrm{Vol}(B(x_0,\sqrt{T}))/{T^{n/2}}$.   Hence, we can find $J_i\leq C\eps^{4-n}r_i^{n-4}$ points $x_{i,j}\in N_{2\eps}(S_\eps)\cap B(x_0,r_i)$, such that
 \bea
 N_{2\eps}(S_\eps)\cap B(x_0,r_i) \subseteq \bigcup_{j=1}^{J_i} B(x_{i,j},\eps)\, .
 \eea
This yields
 \bea
 \mathrm{Cap}_N(N_{2\eps}(S_\eps) \cap B(x_0, r_i))\leq J_i\max_j \text{Cap}_N(B(x_{i,j},\eps))\leq C \eps^2 r_i^{n-4},
  \eea
where we also used that by Lemma \ref{lem-capacity} the Newtonian capacity of $\eps$-balls can be estimated by
 \bea\label{eq_cap_ball} \text{Cap}_N(B(x,\eps)) \le \mathcal{H}^{n-2}_\eps(B(x,\eps))\le C \eps^{n-2}.\eea
Putting things together we conclude that
  \bea \mathbb{P}_{x_0}[ N_\eps(X[0,T])\cap N_\eps(S_\eps)\cap B(x_0,\sqrt{T})\neq \emptyset] 
  \leq C \sum_{i=1}^k \frac{1}{r_{i-1}^{n-2}} \eps^2 r_i^{n-4}\leq C \frac{\eps^2}{r_0^2}.
   \eea
This proves the theorem.
\end{proof}

Finally, let us show that the estimate is sharp by considering the example of the Eguchi-Hanson metric \cite{EguchiHanson}.

\begin{proof}[{Proof of Proposition \ref{prop_EH}}] Let $r_0:=d(x_0, S_\e)$. In view of the lower bound from Theorem \ref{thm-hitting} it is enough to show that
 \bea  \mathrm{Cap}_K (S_\e \cap B(x_0,3r_0) ) \ge  c \e^2 r_0^{-2} . \eea 
To begin with, by \eqref{eqMK} we have
  \bea \mathrm{Cap}_K(  S_\e\cap B(x_0,3r_0))   \ge (3r_0)^{2-n} \mathrm{Cap}_N(  S_\e\cap B(x_0,3r_0))  .\eea
  Next, observe that $S_\e\cap B(x_0,3r_0)$ contains a set of the form
  \bea
  Z:=B^4(p,\eps)\times B^{n-4}(q,2r_0)
  \eea
where $p$ is any point on the central two-sphere of the Eguchi-Hanson manifold, and $q\in \mathbb{R}^{n-4}$. Letting $\nu$ be the uniform probability measure on $Z$, an elementary computation shows that
\bea
\int_Z\int_Z \frac{d\nu(x)\, d\nu (y)}{d(x,y)^{n-2}}&= \frac{1}{\textrm{Vol}(Z)^2}\int_Z\int_Z \frac{dV(x)\, dV (y)}{d(x,y)^{n-2}}\leq \frac{C\eps^2}{\textrm{Vol}(Z)}\leq \frac{C}{\eps^2 r_0^{n-4}}.
\eea
Hence,
\bea
\mathrm{Cap}_N(Z)\geq C^{-1}\eps^2r_0^{n-4}.
\eea  
 Putting things together, this proves the proposition.
 \end{proof}

  \bigskip
  
\subsection{Weak solutions of the Einstein equations} In this section we prove that every noncollapsed limit of Ricci-flat manifolds is a weak solution of the Einstein equations.

\begin{proof}[{Proof of Theorem \ref{thm_ricci_limit}}]

Let $(M,d,m)$ be a noncollapsed limit of Ricci-flat manifolds.
The key is to establish the following claim.

\begin{claim}\label{claim_key}
For every regular point $x_0\in M$ almost every path of Brownian motion starting at $x_0$ stays in the regular part of $M$.
\end{claim}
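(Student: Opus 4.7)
The plan is to apply Theorem~\ref{thm_app1} along the defining sequence of smooth Ricci-flat manifolds, control the hitting probability at a dyadic scale $\eps$, and then send $\eps \searrow 0$. Write $\mathcal{R}$ for the regular set and $\mathcal{S} = M \setminus \mathcal{R}$ for the singular set of the limit, and fix $x_0 \in \mathcal{R}$ with $r_0 := d(x_0, \mathcal{S}) > 0$. Since Brownian paths on a noncollapsed Ricci limit are almost surely continuous and non-explosive, by countable union over $T, R \in \mathbb{N}$ it suffices to show, for each fixed $T, R < \infty$,
$$\mathbb{P}_{x_0}\bigl[X_t \in \mathcal{S} \cap B(x_0, R) \text{ for some } 0 < t < T\bigr] = 0.$$

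The first step is to apply Theorem~\ref{thm_app1} on a defining sequence $(M_i, g_i, x_0^i) \to (M, d, m, x_0)$ of smooth Ricci-flat manifolds converging in the pointed Gromov-Hausdorff sense with uniform noncollapsing. Taking $T' := \max(T, R^2)$, the uniform volume lower bound yields
$$\mathbb{P}_{x_0^i}\bigl[N_\eps(X^i[0, T']) \cap N_\eps(S_\eps^i) \cap B(x_0^i, \sqrt{T'}) \neq \emptyset\bigr] \leq \frac{C \eps^2}{d(x_0^i, S_\eps^i)^{2}}$$
with $C = C(n, v)$ independent of $i$. Because $x_0$ is regular, the $\eps$-regularity theorem of Cheeger-Naber combined with the Gromov-Hausdorff convergence gives $d(x_0^i, S_\eps^i) \geq r_0/2$ for all $i$ large and all $\eps < r_0/4$, so the right-hand side is bounded by $4 C \eps^2 / r_0^2$ uniformly in $i$.

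The second step is to pass to the limit. By the convergence of heat kernels and Brownian motions on noncollapsed Ricci limits (Ding, Ambrosio-Honda, Gigli-Mondino-Savar\'e) the laws of $X^i$ starting at $x_0^i$ converge weakly to the law of $X$ starting at $x_0$. Moreover, Cheeger-Colding structure theory implies that any Gromov-Hausdorff subsequential limit of the closed sets $S_\eps^i \cap B(x_0^i, R)$ contains $\mathcal{S} \cap B(x_0, R-\eps)$ for $i$ large. Combining weak convergence of the paths with the upper semicontinuity of the closed hitting event yields
$$\mathbb{P}_{x_0}\bigl[X_t \in N_{2\eps}(\mathcal{S}) \cap B(x_0, R-\eps) \text{ for some } 0 < t < T\bigr] \leq \frac{4 C \eps^2}{r_0^2},$$
and sending $\eps \searrow 0$ finishes the proof.

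The main obstacle I anticipate is making the passage to the limit fully rigorous: one must simultaneously handle weak convergence of Brownian laws, the Hausdorff behavior of $S_\eps^i$ along the sequence, and upper semicontinuity of the hitting event against a shrinking target. A possibly cleaner alternative would be to establish an intrinsic version of Theorem~\ref{thm_app1} directly on the noncollapsed RCD$(0,n)$ limit, since all its ingredients -- the heat kernel bounds \eqref{eq_CY}, \eqref{eq_LY} and the Jiang-Naber volume estimate \eqref{JiangNaber_est} -- are known to survive the limit thanks to the codimension-4 theorem of Cheeger-Naber; the claim would then follow by repeating verbatim the argument used on smooth Ricci-flat manifolds.
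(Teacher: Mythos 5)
Your overall skeleton is the same as the paper's: reduce to fixed $T,R$, apply Theorem~\ref{thm_app1} on the smooth approximating Ricci-flat manifolds with a uniform constant $C(n,v)$, use that $x_0$ regular keeps $d(x_0^i,S^i_\eps)$ bounded below, pass the bound to the limit, and send $\eps\searrow 0$. The difference, and the place where your argument has a genuine gap, is the limit passage. You invoke ``weak convergence of the laws of $X^i$ to the law of $X$'' on path space, citing Ding, Ambrosio--Honda and Gigli--Mondino--Savar\'e; but those references give convergence of heat kernels/heat flows, i.e.\ of finite-dimensional distributions via \eqref{char_wiener}, not weak convergence of the path measures. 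Upgrading finite-dimensional convergence to path-space weak convergence requires a tightness/equicontinuity input, and supplying exactly that input is the content of the paper's proof: the authors discretize time at $t_k=kT/N$, compare the continuous-time hitting event with the finite-dimensional event $\{X_{t_k}\in N_\eps(S_\eps)\textrm{ for some }k\}$ at the cost of an error $Ne^{-\eps^2N/100}$ controlled by the exit-time estimate of Lemma~\ref{std_lemma}, transfer the finite-dimensional event to the approximants using Anderson's local regularity theorem and Ding's heat-kernel convergence, bound it by $C\eps^2$ via Theorem~\ref{thm_app1}, and only then let $N\to\infty$ and $\eps\to 0$. Your proposal omits this mechanism entirely, so as written the crucial inequality transferring the bound from $M_i$ to $M$ is not justified by the cited results. (It could be salvaged by instead citing a genuine path-space convergence theorem for Brownian motions under pointed measured Gromov--Hausdorff convergence of RCD spaces, e.g.\ Suzuki's theorem, but that is a materially stronger tool than what you reference.)

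A second, smaller but real, error is the semicontinuity you appeal to: ``upper semicontinuity of the closed hitting event'' is the wrong direction of the Portmanteau theorem for your purpose. Weak convergence gives $\limsup_i\mathbb{P}^i(F)\le\mathbb{P}(F)$ for closed $F$, whereas you need to bound the limit probability \emph{above} by the approximating probabilities, i.e.\ $\mathbb{P}(G)\le\liminf_i\mathbb{P}^i(G)$, which holds for \emph{open} events; so you should work with the open $2\eps$-enlargement of the target (the fattening from $\eps$ to $2\eps$ does leave room for this, and also absorbs the Gromov--Hausdorff identification errors). Relatedly, the containment of $\mathcal{S}\cap B(x_0,R)$ in the limits of $S^i_\eps$ is not ``Cheeger--Colding structure theory'' per se but the contrapositive of the $\eps$-regularity/local regularity theorem (Anderson), which is what the paper actually uses in \eqref{eq-321}. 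Your closing suggestion of proving an intrinsic version of Theorem~\ref{thm_app1} on the limit is a reasonable alternative in spirit, but it too would require constructing the hitting theory and the Jiang--Naber-type estimate directly on the limit space, which is additional work the paper's discretization argument avoids.
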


\begin{proof}[Proof of the claim] Let $S\subseteq M$ be the singular set. Since a countable union of null events is a null event it is enough to show that for every $0<R<\infty$ and every $T<\infty$ we have
\bea
\mathbb{P}_{x_0}[ X_t \in  S \cap B(x_0,R) \textrm{ for some } { 0<t< T} ] = 0.
\eea
Let $S_\eps\subseteq M$ be the quantitative singular set. For any positive integer $N<\infty$ consider the partition $t_k=k T/N$, where $k=1,\ldots, N$.
By Lemma \ref{std_lemma} (see below) we can estimate
\begin{multline}
\mathbb{P}_{x_0}[ X_t \in  S_\eps \cap B(x_0,R) \textrm{ for some } { 0<t<T} ] \\
\leq \mathbb{P}_{x_0}[ X_{t_k} \in N_\eps(S_\eps) \cap B(x_0,R) \textrm{ for some } 1\leq k\leq N ] + N e^{-\frac{\eps^2N}{100}}.
\end{multline}
By assumption we have the pointed measured Gromov-Hausdorff convergence $(M_i,g_i,dV_i)\to (M,d,m)$. Using the local regularity theorem \cite{Anderson} and the convergence of the heat kernel \cite{Ding} we can estimate
\begin{multline}\label{eq-321}
\mathbb{P}_{x_0}[ X_{t_k} \in N_\eps(S_\eps) \cap B(x_0,R) \textrm{ for some } 1\leq k\leq N ]\\
 \leq \limsup_{i\to \infty}\mathbb{P}_{x_0^i}[ B_{t_k}^i \in N_{2\eps}(S_{2\eps}^i)\cap B(x_0^i,R+1) \textrm{ for some } 1\leq k\leq N ] \, .
\end{multline}
By Theorem \ref{thm_app1} this is bounded by $C\eps^2$ for $\eps$ small enough. Hence,
\bea
\mathbb{P}_{x_0}[ X_t \in  S_\eps \cap B(x_0,R) \textrm{ for some } { 0<t< T} ] \leq C\eps^2 + N e^{-\frac{\eps^2N}{100}}.
\eea
Sending $N\to \infty$ this yields
\bea
\mathbb{P}_{x_0}[ X_t \in  S_\eps \cap B(x_0,R) \textrm{ for some } { 0<t< T} ] \leq C\eps^2.
\eea
Sending $\eps\to 0$ we conclude that
\bea
\mathbb{P}_{x_0}[ X_t \in  S \cap B(x_0,R) \textrm{ for some } { 0<t< T} ] =0\, .
\eea
This proves the the claim.
\end{proof}

Using Claim \ref{claim_key}, and also that both $u$ and $\nabla u$ have a well-behaved heat flow on $(M,d,m)$ thanks to the vanishing capacity of the singular set, the computation from \cite[Section 6.2]{Naber13} (or alternatively the simplified computation from  \cite{HaslhoferNaber_martingales}) now goes through and shows that

\begin{equation}
|\nabla_x \mathbb{E}_x [F]| \leq \mathbb{E}_x [|\nabla^\parallel F|]\, .
\end{equation}
This proves the theorem.
\end{proof}

In the above proof we used the following lemma:

\begin{lemma}\label{std_lemma}
For any geodesic ball $B(x_0,r)$ properly contained in a (possibly incomplete) manifold with non-negative Ricci curvature, denoting by $\tau$ the first exit time, for every $\delta\leq \tfrac{1}{2n}r^2$ we have
\begin{equation}
\mathbb{P}_{x_0}[\tau \leq \delta]\leq e^{-\frac{r^2}{100\delta}}.
\end{equation}
\end{lemma}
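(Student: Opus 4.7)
The plan is to run the standard exponential supermartingale argument with the test function $f(x) := d(x_0,x)^2$. The analytic input is the Laplacian comparison theorem: since $\mathrm{Ric}\geq 0$, one has $\Delta d \leq (n-1)/d$ away from $x_0$ and the cut locus, and therefore
\begin{equation*}
\Delta f \leq 2n, \qquad |\nabla f|^2 = 4 f
\end{equation*}
(in the appropriate generalized sense, see below). For constants $\alpha,\beta>0$ to be chosen, consider the process
\begin{equation*}
M_t := \exp\!\bigl(\beta f(X_{t\wedge\tau}) - \alpha(t\wedge\tau)\bigr).
\end{equation*}
Before the exit time we have $f(X_t)\leq r^2$, so Itô's formula combined with the bounds above gives
\begin{equation*}
dM_t \leq (\textrm{local martingale}) + M_t\bigl(-\alpha + n\beta + 2\beta^2 r^2\bigr)\,dt.
\end{equation*}
Thus, choosing $\alpha := n\beta + 2\beta^2 r^2$ turns $M_t$ into a bounded supermartingale, and so $\mathbb{E}_{x_0}[M_{\tau\wedge\delta}] \leq M_0 = 1$.

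Next I would harvest the tail estimate. On the event $\{\tau\leq\delta\}$ we have $f(X_\tau)=r^2$ and $\tau\leq\delta$, hence $M_{\tau\wedge\delta} \geq e^{\beta r^2 - \alpha\delta}$. Combining with the supermartingale inequality yields
\begin{equation*}
\mathbb{P}_{x_0}[\tau\leq\delta] \leq \exp\!\bigl(\alpha\delta - \beta r^2\bigr) = \exp\!\bigl(\beta(n\delta - r^2) + 2\beta^2 r^2\delta\bigr).
\end{equation*}
The hypothesis $\delta\leq r^2/(2n)$ ensures $r^2-n\delta \geq r^2/2 > 0$, and optimizing in $\beta$ gives $\beta_* = (r^2-n\delta)/(4 r^2\delta)$ and
\begin{equation*}
\mathbb{P}_{x_0}[\tau\leq\delta] \leq \exp\!\left(-\frac{(r^2-n\delta)^2}{8 r^2\delta}\right) \leq \exp\!\left(-\frac{r^2}{32\delta}\right),
\end{equation*}
which is stronger than the claimed bound with $1/100$.

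The only technical obstacle is that $f = d(x_0,\cdot)^2$ is not globally smooth on $M$, as it fails to be differentiable on the cut locus of $x_0$, so Itô's formula cannot be applied verbatim. The standard remedy is Calabi's trick: at any point $y\in M\setminus\{x_0\}$ one constructs a smooth upper support function $f_{y,\eta}$ with $f_{y,\eta}(y)=f(y)$, $f_{y,\eta}\geq f$ locally, $|\nabla f_{y,\eta}|^2 \leq 4 f_{y,\eta}+\eta$, and $\Delta f_{y,\eta}\leq 2n+\eta$; applying Itô to $\exp(\beta f_{y,\eta}-\alpha t)$ and passing to a limit $\eta\downarrow 0$ gives the required pathwise domination of $M_t$ by a genuine supermartingale. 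Equivalently, one may invoke the distributional (or barrier) Laplacian bound $\Delta f \leq 2n$, which is well known to hold under $\mathrm{Ric}\geq 0$, and apply Itô's formula for semimartingales of semiconcave functions. Either route justifies the inequality $\mathbb{E}_{x_0}[M_{\tau\wedge\delta}]\leq 1$ used above.
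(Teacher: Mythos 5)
Your argument is correct, but it takes a genuinely different route from the paper. You run an exponential (Chernoff-type) supermartingale argument: apply It\^o to $\exp(\beta\, d(x_0,X_{t\wedge\tau})^2-\alpha (t\wedge\tau))$, absorb the drift using $\Delta d^2\le 2n$ and $|\nabla d^2|^2=4d^2$, and optimize over $\beta$, which gives the exponent $-(r^2-n\delta)^2/(8r^2\delta)\le -r^2/(32\delta)$. The paper instead keeps the estimate linear in $r_t^2$: it writes Kendall's semimartingale decomposition $dr_t=d\beta_t+\tfrac12\Delta r(X_t)\,dt-dL_t$ (the local time $L_t$ at the cut locus enters with a favorable sign), deduces $r_t^2\le 2\int_0^t r_s\,d\beta_s+nt$, time-changes the stochastic integral into a one-dimensional Brownian motion via L\'evy/Dambis--Dubins--Schwarz with $\sigma\le r^2\tau$, and finishes with the reflection principle and a Gaussian tail; this also yields roughly $e^{-r^2/(32\delta)}$, stated with the weaker constant $100$. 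Both proofs rest on the same geometric input (Laplacian comparison under $\mathrm{Ric}\ge0$) and both must handle non-smoothness of the distance at the cut locus: the paper's use of Kendall's decomposition is exactly the rigorous pathwise version of what you sketch via Calabi's trick or the barrier Laplacian, and citing Kendall would let you skip that technical discussion entirely. What your route buys is the avoidance of the time-change and reflection steps, at the cost of the optimization in $\beta$; what the paper's route buys is that all measure-theoretic subtleties are delegated to one standard citation. One shared caveat: both computations implicitly normalize Brownian motion to have generator $\tfrac12\Delta$ (quadratic variation of $r_t$ equal to $t$), whereas the Wiener measure defined through the heat kernel normalization \eqref{char_wiener} corresponds to generator $\Delta$, which doubles the drift and quadratic variation and tightens the margin allowed by $\delta\le\tfrac{1}{2n}r^2$; since you follow the paper's convention, this is not a gap in your argument relative to the paper's.
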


\begin{proof}
Write $r(x):=d(x,x_0)$ and consider the radial process $r_t:=r(X_t)$. By a classical result of Kendall \cite{Kendall} this satisfies the evolution equation
\bea
dr_t = d\beta_t +\tfrac{1}{2}\Delta r (X_t)\, dt -dL_t,
\eea
where $\beta_t$ is a one-dimensional Euclidean Brownian motion and $L_t$ is a nondecreasing process (which increases only on the cutlocus). Using Ito calculus we infer that
\bea
dr_t^2 \leq 2 r_t \, d\beta_t + r_t\Delta r(X_t)\, dt + dt,
\eea
where the last term comes from the quadratic variation. Together with the assumption that $\mathrm{Ric}\geq 0$, and hence $\Delta r\leq \frac{n-1}{r}$ by Laplace comparison, this yields
\bea
r_t^2 \leq 2\int_0^t r_s\, d\beta_s +n t\, .
\eea
Taking $t=\tau$ we see that the event $\{\tau\leq \delta\}$ implies
\bea
\int_0^\tau r_s\, d\beta_s\geq \frac{r^2-n\tau}{2}.
\eea
By Levy's criterion there is a Brownian motion $W$ such that
\bea
\int_0^\tau r_s\, d\beta_s =W_{\sigma}, \quad\textrm{ where } \quad
\sigma=\int_0^\tau r_s^2 ds \leq r^2\tau\, .
\eea
Hence $\{\tau\leq \delta\}$ implies
\bea
\max_{0\leq s \leq r^2\delta} W_s \geq \frac{r^2-n\delta}{2}.
\eea
We conclude that
\bea
\mathbb{P}_{x_0}[\tau \leq \delta] \leq \mathbb{P}_{x_0}\left[ |W_1| \geq \frac{1}{r\delta^{1/2}}\frac{r^2-n\delta}{2}\right] \leq e^{-\frac{r^2}{100\delta}}\, .
\eea
This proves the lemma.
\end{proof}

\bigskip

\subsection{Effective intersection estimate for two independent Brownian motions}

In this final section, we prove our effective intersection estimate.

\begin{proof}[{Proof of Theorem \ref{thm_eff_int}}]
Since the two Brownian motions are independent, Theorem \ref{thm-hitting} yields
\bea 
C^{-1} \mathbb{E}_{\tilde{x}_0}[ \mathrm{Cap}_{K_{x_0}}(N_{\eps}(\tilde{X}))]\leq \mathbb{P}_{{x}_0,\tilde x_0}[N_{\eps/2}(X)\cap N_{\eps/2}(\tilde{X}) \neq \emptyset ]
\leq C \mathbb{E}_{\tilde{x}_0}[ \mathrm{Cap}_{K_{x_0}}(N_{\eps}(\tilde{X}))]\, ,
\eea
where we abbreviated $N_\eps(X)=N_\eps(X[0,\infty))$. We thus have the estimate the expected Martin capacity of the Wiener sausage from above and below.\\

Let us first prove the upper bound. To this end, first observe that for any Borel $A\subseteq M$ we can compute the expected volume via the formula
 \bea  \mathbb{E}_{\tilde x_0} [\mathrm{Vol}(N_\e(\tilde X) \cap A )] 
 &= \int_A \mathbb{P}_{\tilde x_0} [\tilde  X_t \in B(y,\e) \text{ for some } 0<t<\infty\, ] \, dV(y) \, .
 \eea
 Using again Theorem \ref{thm-hitting}, as well as the equations \eqref{eqMK} and \eqref{eq_cap_ball}   we can estimate this by 
 \bea\label{eq-37}
 \mathbb{E}_{\tilde x_0} [\mathrm{Vol}(N_\e(\tilde X) \cap A )] 
 &\le C   \int_A\mathrm{Cap}_{K_{\tilde x_0}}(B(y,\e)) \, dV(y) \\
 &\le C (d(\tilde x_0,A)-\e)_+^{2-n} \e^{n-2} \mathrm{Vol} (A)\, .  \eea
 On the other hand, for any Borel set $B\subseteq M$ we can estimate
 \bea
 \mathrm{Cap}_{K_{x_0}}(B)\leq d(x_0,B)^{2-n} \mathrm{Cap}_{N}(B) \leq Cd(x_0,B)^{2-n}\eps^{-2}\textrm{Vol}(N_\eps(B))\, .
 \eea
Combining the above, we infer that for any Borel set $A\subseteq M$ with $d(\tilde x_0,A)\ge 2\e$ we have the useful estimate
 \bea\label{eq_useful} \mathbb{E}_{\tilde x_0} [ \mathrm{Cap}_{K_{x_0}} (N_{\e}(\tilde X) \cap A )] 
 \le  C \e^{n-4} d(x_0,A)^{2-n}d(\tilde x_0,A)^{2-n} \mathrm{Vol}(N_\eps(A)), \eea 
which we will apply repeatedly in the following.
\bigskip

Set  $r_0:= d(x_0,\tilde x_0)$ and let $E:=B(x_0,r_0/2)\cup B(\tilde{x}_0,r_0/2)$. We decompose
\begin{multline}
N_\eps(\tilde X) \setminus E =
N_{\e}(\tilde X) \cap B(x_0,2r_0)\setminus E
\cup\bigcup_{i=1}^\infty N_{\e}(\tilde X) \cap B(x_0,2^{i+1}r_0) \setminus B(x_0,2^ir_0).
\end{multline}
Hence, using \eqref{eq_useful} we can estimate
  \bea \mathbb{E}_{\tilde x_0} [ \mathrm{Cap}_{K_{x_0}} (N_{\e}(\tilde X) \setminus E)]  &\le  \sum_{i=0}^\infty C \e^{n-4} (2^i r_0) ^{2-n}(2^i r_0)^{2-n} (2^i r_0)^n \\
&  \le C \e^{n-4} r_0^{4-n} .\eea 
  It remains to show similar bounds for the expected Martin capacity of $N_\eps(\tilde X) \cap E$. To this end, first observe that Theorem \ref{thm-hitting} yields
 \begin{multline}
    \mathbb{E}_{\tilde x_0} [ \mathrm{Cap}_{K_{x_0}} (N_{\e}(\tilde X) \cap B(x_0,2\e) )] \\
    \leq   \mathrm{Cap}_{K_{x_0}}(B(x_0,2\e)) \mathbb{P}_{\tilde x_0} [ \tilde X_t  \in B(x_0, 3\e) \text{ for some } 0<t<\infty]
\le C \e^{n-2} r_0^{2-n}.
\end{multline}
Now, decomposing $B(x_0,r_0/2)\setminus B(x_0,2\eps)$ into dyadic annuli with radii $2^i \eps$, letting $k$ be the smallest integer such that $2^k\eps\geq r_0/2$, and using \eqref{eq_useful} we can estimate
\bea   \mathbb{E}_{\tilde x_0} [ \mathrm{Cap}_{K_{x_0}} (N_{\e}(\tilde X) \cap B(x_0,r_0/2) \setminus B(x_0,2\e) )] 
   & \le C\sum_{i=1}^k\e^{n-4} r_0^{2-n} (2^i \e)^{2-n } (2^{i}\e)^n \\
   & \le C \e^{n-4} r_0^{4-n}.\eea  
Combining the above estimates, and repeating the same argument with $x_0$ and $\tilde{x}_0$ interchanged, we conclude that
     \bea \mathbb{E}_{\tilde x_0} [ \mathrm{Cap}_{K_{x_0}} (N_{\e}(\tilde X) \cap E)]   \le C \e^{n-4} r_0^{4-n} .\eea 
This finishes the proof of upper bound.

\bigskip 

For the lower bound, it suffices to show $ \mathbb{E}_{\tilde x_0} [\mathrm{Cap}_{K_{x_0}} (N_\e(\tilde X)\cap A )] \ge C^{-1}  {\e}^{n-4}r_0^{4-n}$ for some $A\subseteq M$. With this goal, a convenient choice is  $A:= B( x_0, 4r_0) \setminus B(x_0, 2r_0)$, as there is a positive $C=C(n,v)<\infty$  such that 
\bea\label{eq-comparable}C^{-1} r_0^n \le \mathrm{Vol}(A)\quad \text{ and }\quad C^{-1} r_0 \le  d(x_0,z) , d(\tilde x_0,z) \le C r_0 \quad \text{ for all } z\in A  .\eea
Inserting the uniform measure in the definition of $\mathrm{Cap}_{K_{x_0}}$ we see that
\bea\label{eq-45} \mathrm{Cap}_{K_{x_0}} (N_\e(\tilde X)\cap A )\ge C^{-1}r_0^{2-n}   \frac{ \mathrm{Vol}( N_\e(\tilde X)\cap A)^2 }{ \iint_{(N_\e(\tilde X)\cap A)^2} d(x,y)^{2-n}\, dV(x) dV(y) } .\eea
Arguing similarly as in the derivation of \eqref{eq-37}, but now using the lower bound of Theorem \ref{thm-hitting}, we obtain
\bea\mathbb{E}_{\tilde x_0}[\mathrm{Vol}( N_\e(\tilde X )\cap A)] \ge C^{-1}r_0^{2} \e^{n-2} . \eea 
Together with the Cauchy-Schwarz inequality this yields
\bea\label{eq-46}\mathbb{E}_{\tilde x_0} \left[   \frac{ \mathrm{Vol}( N_\e(\tilde X)\cap A)^2 }{ \iint_{(N_\e(\tilde X)\cap A)^2} d(x,y)^{2-n}\, dV(x) dV(y) }  \right]    \ge  C^{-1}\frac{r_0^{4}\eps^{2n-4}  } {\mathbb{E}_{\tilde x_0} \left[\iint_{(N_\e(\tilde X)\cap A)^2} d(x,y)^{2-n}\, dV(x) dV(y)\right]} . \eea 
To proceed, we rewrite the denominator as
\begin{multline} \label {eq-47} \mathbb{E}_{\tilde x_0} \left[\iint_{(N_\e(\tilde X)\cap A)^2} d(x,y)^{2-n}\, dV(x)dV(y)\right] \\ = \iint_{A\times A}  \mathbb{P}_{\tilde x_0}[\tilde X \text{ hits } B(x,\e) \text{ and } B(y,\e)] d(x,y)^{2-n}\, dV(x) dV(y) .\end{multline}
By symmetry and the strong Markov property we can estimate
\begin{multline}
\iint_{A\times A} \mathbb{P}_{\tilde x_0}[\tilde X \text{ hits } B(x,\e) \text{ and } B(y,\e)]d(x,y)^{2-n}\, dV(x) dV(y)\\\leq 2\iint_{A\times A}  \mathbb{P}_{\tilde x_0}[\tilde X \text{ hits } B(x,\e) ] \sup_{x'\in B(x,\eps)} \mathbb{P}_{x'}[X' \text{ hits } B(y,\e) ]d(x,y)^{2-n}\, dV(x) dV(y) \, .
\end{multline}
Now, using again Theorem \ref{thm-hitting}, for fixed $x$ and $x'$ we have
\begin{multline}
\int_A \mathbb{P}_{x'}[X' \text{ hits } B(y,\e) ] d(x,y)^{2-n} \, dV(y)\\\leq C\int_{A\cap B(x,4\eps)} d(x,y)^{2-n}dV(y) + C\int_{A\setminus B(x,4\eps)} \eps^{n-2} d(x',y)^{2-n} d(x,y)^{2-n} \, dV(y)\leq C\eps^2\, .
\end{multline}
Combining the above formulas we infer that
\bea
 \mathbb{E}_{\tilde x_0} \left[\iint_{(N_\e(\tilde X)\cap A)^2} d(x,y)^{2-n}\, dV(x)dV(y)\right] \leq C\eps^2\int_A \mathbb{P}_{\tilde x_0}[\tilde X \text{ hits } B(x,\e) ] \, dV(x)\leq Cr_0^2\eps^n\, ,
\eea
and thus conclude that
\bea
 \mathbb{E}_{\tilde x_0} [\mathrm{Cap}_{K_{x_0}} (N_\e(\tilde X)\cap A )] \ge C^{-1} r_0^{4-n} {\e}^{n-4}\, .
\eea
This finishes the proof of the theorem.
\end{proof}

 \section{Non-Ricci-flat case}\label{sec-othercase}
 
In this final section, we generalize the hitting estimate from Theorem \ref{thm-hitting} to the case of non-zero Ricci lower bounds. As an application, we obtain an upper bound on the hitting probability of Brownian motion to quantitative singular sets on the manifolds with two-sided Ricci bounds. We will also see that noncollapsed limits of Einstein manifolds with bounded Einstein constants satisfies the same two-sided Ricci bound in the weak sense of Naber \cite{Naber13}. The proofs are relatively minor modifications of the ones from the previous sections. 
 
 \bigskip

Let us start by heat kernel estimates. For complete manifold $(M^n,g)$ with $\mathrm{Ric}\ge -(n-1)g$, we have the heat kernel upper bound 
\bea  \rho_t(x,y) \le C_n \mathrm{Vol}^{-\frac12}(B(x,\sqrt t)) \mathrm{Vol}^{-\frac12}(B(y,\sqrt t)) e^{t - \frac{d(x,y)^2}{8t}}\eea 
 from Li-Yau \cite[Section 3]{LiYau}. On the other hand, Cheeger-Yau \cite{CheegerYau} showed the heat kernel is bounded below  by that on hyperbolic space. The heat kernel on hyperbolic space is analyzed in Davies-Mandouvalos \cite[Theorem 3.1]{DMheat}. Thus we obtain 
 \bea  \rho_t(x,y)\ge {C_n}^{-1}t^{-\frac n2} e^{-\frac{d(x,y)^2}{4t} - \frac{(n-1)^2t}{4} - \frac{(n-1) d(x,y)}{2}}.\eea
 In the remaining part, the following weaker bounds will be sufficient: 	There is a constant $C=C(n,v,T)<\infty$, where $v>0$ is a lower bound for $\mathrm{Vol}(B(x_0,1))$, such that 
 	\bea \label{eq-heatkernelbound2}  \frac{1}{Ct^{n/2}}e^{-\frac{d(x,y)^2}{4t}}\le \rho_t(x,y) \le \frac{C}{t^{n/2}}e^{-\frac{d(x,y)^2}{8t}} \eea
for  $0<t<2T$ 
 
 \bigskip

In the following estimate on hitting estimate, the dependency of $\Lambda$ on $T$ is not as sharp as Theorem \ref{thm-hitting}, mainly because the heat kernel estimates are less sharp than for nonnegative Ricci curvature. However, the estimate is still strong enough for the applications discussed below.
 
\begin{theorem} [c.f. Theorem \ref{thm-hitting}] \label{thm-hitting'}Let $(M^n,g)$ be a complete Riemannian manifold of dimension $n\geq 3$, with $\mathrm{Ric}\ge -(n-1)g$.  Then for all finite $T>0$ and compact sets $A\subseteq M^n$ with $\mathrm{diam}(A\cup \{x_0\})\le R$, we have the hitting estimate
 \bea \frac{1}{\Lambda}  \mathrm{Cap}_{K}(A)\le \mathbb{P}_{x_0}[ X_t \in  A \textrm{ for some } { 0<t< T} ] \le  \Lambda \mathrm{Cap}_{K}(A), \eea 
where $ \Lambda=   \Lambda(n,v,T,R)<\infty$,  with $v = \mathrm{Vol}(B(x_0,1))$. 
\end{theorem}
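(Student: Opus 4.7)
The plan is to follow the same two-part strategy as in the proof of Theorem \ref{thm-hitting}, with the heat kernel bounds \eqref{eq_CY} and \eqref{eq_LY} replaced by the weaker but structurally identical two-sided estimate \eqref{eq-heatkernelbound2}. Since both the exponent coefficients in the Gaussian factors and the polynomial $t^{-n/2}$ prefactor have the same qualitative form as before, the algebraic manipulations that derived \eqref{eq-bda} and \eqref{green_eq} remain valid; only the numerical constants change and acquire dependence on $T$ and $R$.

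For the \textbf{upper bound}, I would carry out the entrance-time decomposition verbatim: define the stopping time $\tau = \inf\{t>0 : X_t\in A\}$, set $\mu$ to be the distribution of $X_{\tau\wedge T}$, and compare the expected occupancy time $\mathbb{E}_{x_0}[\int_0^{2T} 1_{\{X_t\in A'\}}dt]$ against itself using the Cheeger--Yau-type lower bound and the Li--Yau-type upper bound of \eqref{eq-heatkernelbound2}. After the substitutions $\xi = d(x,y)^2/(4t)$ and $\xi = d(x,y)^2/(8t)$ respectively, one arrives at the analog of \eqref{cf_eq28}. Because $d(x,y)\le R$ throughout $A\cup\{x_0\}$, the resulting incomplete Gamma integrals $\int_{d(\cdot,\cdot)^2/(cT)}^\infty \xi^{n/2-2}e^{-\xi}d\xi$ are uniformly bounded below by a positive constant $c(n,T,R)$, yielding $\int_A K(x,y)d\mu(x)\le \Lambda(n,v,T,R)$ and hence $\mathbb{P}_{x_0}[\mathrm{hit}]\le \Lambda\,\mathrm{Cap}_K(A)$.

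For the \textbf{lower bound}, I would reuse the second moment estimate: introduce $Z_\eps = \int_A h_\eps(x_0,y)^{-1}\int_0^T 1_{\{X_t\in B(y,\eps)\}}dt\, d\nu(y)$, apply Cauchy--Schwarz to get $\mathbb{P}_{x_0}[\mathrm{hit}]\ge 1/\mathbb{E}_{x_0}[Z_\eps^2]$, and bound $\mathbb{E}_{x_0}[Z_\eps^2]$ via the strong Markov property exactly as in \eqref{eq_sec_mom_est}. The key input is the two-sided Green's-function estimate $C^{-1}d(x,y)^{2-n}\le \int_0^T \rho_t(x,y)dt\le C\,d(x,y)^{2-n}$ for $x,y\in A\cup\{x_0\}$, which follows from \eqref{eq-heatkernelbound2} by the same substitution as above, with $C=C(n,v,T,R)$. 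Claim \ref{lemma_claim} and the coarea/volume comparison estimate used to dominate $h^*_\eps/h_\eps$ go through unchanged, after replacing Bishop--Gromov in Euclidean space by Bishop--Gromov in hyperbolic model space.

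The only genuine subtlety is \textbf{propagating the volume hypothesis}: the upper heat kernel bound \eqref{eq-heatkernelbound2} requires a lower bound on $\mathrm{Vol}(B(y,\sqrt t))$ for $y$ ranging over the relevant region, whereas the hypothesis only provides $v=\mathrm{Vol}(B(x_0,1))$. Since every $y\in A\cup\{x_0\}$ satisfies $d(x_0,y)\le R$, Bishop--Gromov volume comparison (in the hyperbolic model of curvature $-1$) gives a uniform lower bound $\mathrm{Vol}(B(y,\sqrt t))\ge v'(n,v,T,R)\cdot t^{n/2}$ for $0<t\le 2T$, which is what is needed to make the constants $C$ in \eqref{eq-heatkernelbound2} uniform over the region. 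Once this book-keeping is done, the argument produces a single constant $\Lambda = \Lambda(n,v,T,R)$ controlling both inequalities.
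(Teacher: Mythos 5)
Your proposal is correct and follows essentially the same route as the paper, whose proof of Theorem \ref{thm-hitting'} simply reruns the argument of Theorem \ref{thm-hitting} with the heat kernel bounds \eqref{eq_CY}, \eqref{eq_LY} replaced by \eqref{eq-heatkernelbound2}. Your extra remark on propagating the volume lower bound from $\mathrm{Vol}(B(x_0,1))$ to $\mathrm{Vol}(B(y,\sqrt{t}))\ge c(n,v,T,R)\,t^{n/2}$ for $y$ within distance $R$ of $x_0$ via Bishop--Gromov comparison with the hyperbolic model is exactly the book-keeping left implicit in the paper's one-line proof.
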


 \begin{proof}This readily follows from the same argument as in the proof of Theorem \ref{thm-hitting}, except that we now use the bounds \eqref{eq-heatkernelbound2} instead of \eqref{eq_CY} and \eqref{eq_LY}.
\end{proof}
 
 \bigskip
 
 To formulate a theorem for the manifolds with bounded Ricci that is analogous to Theorem \ref{thm_app1} for the Ricci-flat manifold, let us denote the harmonic radius of $x\in M$ by $r_h(x)$ (for the detailed definition see  \cite[Definition 2.9]{CheegerNaber}) and consider quantitative singular set  \bea S_\e ^h:=\{ x\in M\,:\, r_h(x) \le \e\}. \eea  
 They replace the role of the regularity scale $\mathrm{reg}(x)$ in \eqref{eq-regularityscale} and of $S_\e$ in \eqref{eq-esingular}.

  \begin{theorem}[c.f. Theorem \ref{thm_app1}] \label{thm_app1'}Let $(M^n,g)$ be a complete Riemannian manifold of dimension $n\ge 3$ with $-(n-1)g\le \mathrm{Ric}\le (n-1)g$. For all $x_0\in M$ with $d(x_0,S^h_\e)\ge 3\e$, we have the estimate 
 \bea \mathbb{P}_{x_0}\left[N_\eps(X[0,T])\cap N_\eps(S^h_\eps) \cap B(x_0,R) \neq \emptyset  \right]\le \frac{C\e^2}{d(x_0,S^h_\e)^{2}} \eea 
 where $C=C(n,v,R,T)<\infty$ where $v>0$ is a lower bound of $\mathrm{Vol}(B(x_0,1))$. If $(M^n,g)$ is further assumed to be Einstein, then   $S^h_\e$ can be replaced by $S_\e$ in \eqref{eq-esingular} and the same   statement holds.
 \end{theorem}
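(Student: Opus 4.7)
The plan is to mimic the proof of Theorem \ref{thm_app1} almost verbatim, substituting each of its three main inputs by its counterpart in the two-sided Ricci setting. The three inputs are: (i) the hitting estimate from Theorem \ref{thm-hitting}, (ii) the Jiang--Naber codimension-$4$ volume estimate \eqref{JiangNaber_est}, and (iii) Bishop--Gromov volume comparison. In the present setting (i) is replaced by Theorem \ref{thm-hitting'}; (ii) is replaced by the quantitative codimension-$4$ bound for the harmonic-radius singular set, which under $|\mathrm{Ric}|\le (n-1)g$ gives
\[
\mathrm{Vol}(N_{2\eps}(S_\eps^h)\cap B(x_0,r))\le C\,\eps^{4}r^{n-4}
\qquad (\eps<r\le R),
\]
with $C=C(n,v,R)$, by Cheeger--Naber \cite{CheegerNaber} together with the quantitative improvements of Jiang--Naber \cite{JiangNaber}; and in (iii) Bishop--Gromov applied to $\mathrm{Ric}\ge -(n-1)g$ gives volume comparison to hyperbolic space, which for $r\le R$ only costs a bounded factor depending on $R$.

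With these substitutions in place, I would first use Theorem \ref{thm-hitting'} to reduce the claim to an upper bound of the form
\[
\mathrm{Cap}_K\bigl(N_{2\eps}(S_\eps^h)\cap B(x_0,R)\bigr)\le \frac{C\eps^{2}}{d(x_0,S_\eps^h)^{2}}.
\]
Then I would chop the set into dyadic annuli, setting $r_i:=2^i\,d(x_0,S_\eps^h)$ and letting $k$ be the smallest integer with $r_k\ge R$. On each annulus $B(x_0,r_i)\setminus B(x_0,r_{i-1})$, the Martin--Newtonian comparison \eqref{eqMK} gives $\mathrm{Cap}_K\le r_{i-1}^{2-n}\mathrm{Cap}_N$. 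Using the volume bound above together with Bishop--Gromov, cover $N_{2\eps}(S_\eps^h)\cap B(x_0,r_i)$ by at most $C\eps^{4-n}r_i^{n-4}$ balls of radius $\eps$, and apply \eqref{eq_cap_ball} to each. Summing the resulting geometric series in $i$ produces $C\eps^{2}/d(x_0,S_\eps^h)^{2}$, exactly as in the Ricci-flat proof.

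For the second assertion, concerning Einstein manifolds $\mathrm{Ric}=\lambda g$ with $|\lambda|\le n-1$, I would invoke Anderson-type elliptic regularity for the Einstein equation: once a harmonic chart of radius $r_h(x)$ exists, the Einstein equation becomes a quasilinear elliptic system in those coordinates, and Schauder bootstrapping bounds $|\mathrm{Rm}|\le C(n,\lambda)/r_h(x)^{2}$ on a slightly smaller ball. Hence $\mathrm{reg}(x)\ge c(n,\lambda)\,r_h(x)$, so $S_\eps\subseteq S_{C\eps}^h$; rerunning the above argument with $S_{C\eps}^h$ in place of $S_\eps^h$ and enlarging the constant gives the bound stated with $S_\eps$ on the left.

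The main obstacle I anticipate is verifying that the codimension-$4$ volume estimate for $N_{2\eps}(S_\eps^h)$ is available in exactly the form needed, with the right dependence of constants on $(n,v,R)$, under the mere two-sided Ricci bound rather than the Ricci-flat hypothesis treated in \eqref{JiangNaber_est}. The other delicate point is tracking the accumulated dependence of $\Lambda$ in Theorem \ref{thm-hitting'} and of the heat kernel bounds \eqref{eq-heatkernelbound2} on $(n,v,T,R)$ through the dyadic summation, to confirm that the final constant $C=C(n,v,R,T)$ is indeed finite and uniform. Both points are however essentially known and should not require new ideas.
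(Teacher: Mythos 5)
Your proposal is correct and follows essentially the same route as the paper: replace Theorem \ref{thm-hitting} by Theorem \ref{thm-hitting'}, replace \eqref{JiangNaber_est} by the Jiang--Naber bound for the harmonic-radius singular set (which the paper obtains at scale $2R$ and then rescales to all $0<r<2R$), and rerun the dyadic annuli argument verbatim. The Einstein case is also handled exactly as in the paper, via standard elliptic estimates in harmonic coordinates showing the harmonic radius and regularity scale are comparable, so only the constant changes.
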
 	
 	
 	\begin{proof}
By Jiang-Naber \cite[Theorem 1.8]{JiangNaber}, for given $R>0$ there exists a constant $C=C(n,v, R)<\infty$ such that $\mathrm{Vol}(N_{2\e} (S^h_\e) \cap B(x_0,2R)) \le C \e^4(2R)^{n-4} $.  Then, by rescaling, there holds  \bea \label{JiangNaber_est'}\mathrm{Vol}(N_{2\e} (S^h_\e) \cap B(x_0,r)) \le C \e^4r^{n-4} \eea for each $0<r<2R$, with the same $C=C(n,v,R)$ as above. Now the proof of Theorem \ref{thm_app1} exactly goes through  once Theorem \ref{thm-hitting} is replaced by Theorem \ref{thm-hitting'} and \eqref{JiangNaber_est} is replaced by \eqref{JiangNaber_est'}.
 
Moreover, if $(M^n,g)$ is Einstein, then standard elliptic estimates imply that the harmonic radius is comparable to the regularity scale, and hence the second assertion follows by adjusting the constant $C$. 
 	\end{proof}

 \begin{definition}[{c.f. Naber  \cite{Naber13}}]\label{def-boundedricci}
A Riemannian metric measure space $(M,d,m)$ satisfies the two-sided Ricci bound $\textrm{BR}(\kappa,\infty)$ if the gradient estimate
\begin{equation}\label{grad_path2'}
|\nabla_x \mathbb{E}_x [F]| \leq \mathbb{E}_x\left[|\nabla^\parallel_0 F|+ \tfrac{\kappa}{2}\int_0^\infty e^{\frac \kappa 2 s}|\nabla^\parallel_s F|\, ds  \right]
\end{equation}
holds for all test functions $F:PM\to\mathbb{R}$ on path space and $m$-almost every $x\in M$, where $\nabla_x$ is the local Lipschitz slope and the (stochastic) parallel gradient $\nabla^\parallel_s$ is the one constructed in \cite[Section 14]{Naber13}.
\end{definition}
 
 \begin{theorem}[c.f. Theorem \ref{thm_ricci_limit}] Every noncollapsed limit of Einstein manifolds $(M_i^n,g_i)$ with $\mathrm{Ric}_{g_i}=\lambda_i g_i$ and $\limsup_{i\to \infty}|\lambda_i|\le  \kappa $ satisfies the  $\textrm{BR}(\kappa,\infty)$ condition.
  \end{theorem}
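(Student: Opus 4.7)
The plan is to mirror the proof of Theorem \ref{thm_ricci_limit}, substituting its Ricci-flat inputs by their Einstein analogues. Two replacements suffice: (i) use the Einstein version of Theorem \ref{thm_app1'} in place of Theorem \ref{thm_app1}, so that the quantitative singular set $S_\eps$ defined via the regularity scale is controlled on each approximating manifold $(M_i,g_i)$; and (ii) at the end, invoke the path-space characterization from \cite[Section 6.2]{Naber13} for the two-sided bound $\textrm{BR}(\kappa,\infty)$ rather than its Ricci-flat specialization.

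The heart of the argument is the analog of Claim \ref{claim_key}: for every regular point $x_0 \in M$, almost every Brownian path from $x_0$ stays in the regular part of $M$. It suffices to show, for each fixed $R,T<\infty$, that
\bea
\mathbb{P}_{x_0}[X_t \in S \cap B(x_0,R)\textrm{ for some } 0<t<T] = 0,
\eea
by partitioning $[0,T]$ into $N$ equal pieces and comparing continuous occupation with occupation at the discrete times $t_k=kT/N$. The discretization step uses an exit-time bound analogous to Lemma \ref{std_lemma} but for $\mathrm{Ric} \ge -(n-1)g$; this still follows from Kendall's radial comparison, since Laplace comparison now reads $\Delta r \le (n-1)\coth r$, giving a Gaussian upper bound of the same form $e^{-r^2/C\delta}$ on short time scales (with $C$ depending on $R$). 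Once this is in place, pointed measured Gromov--Hausdorff convergence $(M_i,g_i,dV_i) \to (M,d,m)$, Anderson's $\eps$-regularity \cite{Anderson}, and Ding's convergence of heat kernels \cite{Ding} combine to bound the displayed probability by
\bea
\limsup_{i\to\infty}\mathbb{P}_{x_0^i}[B^i_{t_k} \in N_{2\eps}(S^i_{2\eps})\cap B(x_0^i,R+1)\textrm{ for some } 1\le k\le N],
\eea
to which the Einstein form of Theorem \ref{thm_app1'} applies and yields a bound $C\eps^2$ with constants $C=C(n,v,R,T)$ uniform in $i$. Sending $N\to\infty$ and then $\eps\to 0$ concludes the claim.

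With the paths confirmed to remain in the regular part, and with the singular set $S$ having vanishing Newtonian capacity so that $u$ and $\nabla u$ admit well-behaved heat flows on $(M,d,m)$, one runs the parallel-transport computation from \cite[Section 6.2]{Naber13} (or its streamlined version in \cite{HaslhoferNaber_martingales}) on the smooth Einstein part, where the limiting bound $|\mathrm{Ric}|\le \kappa g$ holds pointwise. That computation yields exactly the path-space estimate \eqref{grad_path2'}, which by Definition \ref{def-boundedricci} is the $\textrm{BR}(\kappa,\infty)$ condition.

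The main technical obstacle is keeping the constants uniform in the index $i$ throughout the approximation. Specifically, the constant in Theorem \ref{thm_app1'} depends on a lower bound for $\mathrm{Vol}(B(x_0,1))$, on $R$, and on $T$; and the adapted exit-time lemma depends on a lower Ricci bound on fixed-size balls. Both are controlled uniformly thanks to noncollapsing and the hypothesis $|\lambda_i|\le\kappa+o(1)$, but the passage between scales on $M$ and $M_i$ under pointed Gromov--Hausdorff convergence, and the identification of $S^i_{2\eps}$ with a neighborhood of the limit singular set, must be tracked carefully to ensure the $\eps\to 0$ limit is valid.
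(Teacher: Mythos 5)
Your proposal is correct and follows essentially the same route as the paper: replace Theorem \ref{thm_app1} by the Einstein case of Theorem \ref{thm_app1'}, rerun the argument of Claim \ref{claim_key} to show Brownian paths stay in the regular part, and then apply Naber's path-space computation using $-\kappa g\le \mathrm{Ric}\le \kappa g$ on $M\setminus S$ to obtain \eqref{grad_path2'}. Your explicit adaptation of Lemma \ref{std_lemma} via $\Delta r\le (n-1)\coth r$ and your attention to uniformity of constants in $i$ are details the paper leaves implicit, but they do not change the argument.
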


 	\begin{proof} Let $(M,d,m)$ be the Gromov-Hausdorff limit of the noncollapsed Einstein manifolds, and let $S\subset M$ be the singular set.
	Using Theorem \ref{thm_app1'} instead of Theorem \ref{thm_app1}, and arguing similarly as in the proof of Claim \ref{claim_key} we see that Brownian motion stays in $M\setminus S$ almost surely. Observing also that on $M\setminus S$ we have $-\kappa  \leq \mathrm{Rc} \leq \kappa$, the rest of the argument is similar as before. 
 	\end{proof}

\bibliography{ChoiHaslhofer}

\bibliographystyle{alpha}

\vspace{10mm}

{\sc Beomjun Choi, Department of Mathematics, University of Toronto,  40 St George Street, Toronto, ON M5S 2E4, Canada}\\

{\sc Robert Haslhofer, Department of Mathematics, University of Toronto,  40 St George Street, Toronto, ON M5S 2E4, Canada}\\

\end{document}